\pgfplotsset{compat=1.16} 
\newcommand{\amsprimary}[1]{{\footnotesize\noindent AMS 2010 \textit{Mathematics subject
classification:} Primary #1\vspace{1pc}}}
\newcommand{\keywordsnames}[1]{{\footnotesize\noindent\textit{Key words:} #1\vspace{1pc}}}
\newtheorem{teo}{Theorem}
\newtheorem{prop}{Proposition}[section]
\newtheorem{lemma}{Lemma}
\theoremstyle{definition}
\newtheorem{dfn}{Definition}[section]
\newcommand{\bpi}{{\mathcal{B}_{\pi}}}
\newcommand{\dpi}{{d_{\pi}}}
\newcommand{\Hpi}[1]{{H^{#1}_{\pi}}}
\newcommand{\Cas}[1]{{\rm Cas}_{\pi}({#1})} 
\newcommand{\Vpi}[1]{\mathcal{V}^{#1}}
\newcommand{\HdR}[1]{{H^{#1}_{dR}}}
\newcommand{\FRb}[1]{({#1}, \{, \}_{\mu})}
\newcommand{\FR}{{\bf JP\ }}
\newcommand{\cproc}[2]{\wedge_{{#1}{#2}}}
\title[Poisson Cohomology of singular fibrations]{Poisson cohomology of singular fibrations in dimension $4$}
\author{N. Bárcenas}
\address[N. BÁRCENAS]{Centro de Ciencias Matemáticas\\
	  Universidad Nacional Autónoma de México\\
	  Morelia, Mich., México}
	\email{barcenas@matmor.unam.mx}
	\author{J. Torres Orozco}
	\address[J. TORRES OROZCO]{Facultad de Ciencias\\
	  Universidad Nacional Autónoma de México\\
	  Ciudad de México, México}
	\email{jonatan.tto@gmail.com}
\date{}
\begin{document}
\maketitle

\begin{abstract}
It is known that there exist singular Poisson structures in $4$-manifolds, whose symplectic foliation is given by singular fibrations over surfaces. In this work, we describe the effect of the monodromy of the fibration on the Poisson cohomology groups. 
\end{abstract}

{\keywordsnames {Poisson cohomology, generic maps, Thom class, monodromy representation.}}
	
	{\amsprimary {53D17, 55N35, 55R55, 57K20, 57R45.}}

\section{Introduction}
%
%
%
%

A Poisson structure on a smoooth manifold $M$ is a Lie algebra structure on $C^{\infty}(M)$, the space of smooth real valued smooth functions on $M$, whose bracket $\{ , \}$ satisfies a Leibniz rule. The Lie bracket induces an anchor map that additionally endows $T^*M$ with a Lie algebroid structure. This algebraic perspective enriches the understanding of the geometry and topology of the underlying manifold, from both local and global point of view. 

A manifold endowed with such a structure can be described as a foliated space, whose leaves inherit a symplectic form. This is known as the symplectic foliation. Poisson structures yield  a  more  general notion of  a  symplectic  structure,  since they can be defined on odd dimensional  smooth  manifolds. The  tools  needed  to  understand a  manifold that  admits  a  Poisson structure include  aspects of singularity  theory. In  particular, the  classification  of  singularities  is  needed, as they provide "paradigmatic", "canonical" singular Poisson  structures.  
\newline

Poisson structures are closely related to near-symplectic structures and singular fibrations. S. Donaldson establishes a correspondence between Lefschetz pencils and symplectic manifolds of dimension $4$ \cite{Donaldson99}. Lefschetz pencils are applications on the $2$-sphere that have a finite number of isolated singular points, where the differential has rank equals zero. D. Auroux, S. Donaldson, and L. Katzarkov \cite{ADK05} introduced broken Lefschetz fibrations (BLF for short), which have an additional type of singularity, a $1$-dimensional submanifold formed by indefinite folds. They showed that they are reciprocal to near-symplectic structures, which are closed $2$-forms that are non-degenerate, except on a collection of circles where they vanish. 
\newline

In a sequel of works, it was shown that Poisson structures adapted to singular fibrations can be constructed. The singular symplectic leaves matches with those of the inherent fibration. In \cite{GSSV15}, the authors proved the existence of rank 2 Poisson structures in BLF's. Namely, it is known that a BLF always exists on any closed oriented smooth manifold of dimension $4$. As extension of the previous one, in \cite{SST16}, Poisson structures were constructed  on  4-manifolds  which  admit  a  wrinkled  fibration.  A similar approach was given in dimension $6$, for maps that generalize wrinkled fibrations, and near-symplectic structures were studied in \cite{SSTV19}. In each case, a similar procedure was carried out, by prescribing the singularities, given by the corresponding fibration and then perform the local calculations of the Poisson bivectors as well as the symplectic forms of the symplectic folation.
\newline

One natural way to study global properties is by computing the cohomology. In the case of a symplectic estructure, it means to compute the de Rham cohomology; for a Poisson structure, it means the cohomology of $T^*M$ as Lie algebroid. The latter is known as {\em Poisson cohomology}. The Poisson cohomology groups, denoted by $\Hpi{*}(M)$, provide information about the Poisson structure in the sense that:
\begin{eqnarray*}
\Hpi{0}(M)&=&\{f\in C^{\infty}(M)|\ \{f, \cdot\}=0\}, \mbox{\ referred to as the space of Casimirs,}\\
\Hpi{1}(M)&=&\frac{\{\mbox{Poisson vector fields}\}}{\{\mbox{Hamiltonian vector fields}\}},\\
\Hpi{2}(M)&=&\frac{\{\mbox{infinitesimal deformations of the Poisson structure}\}}{\{\mbox{trivial deformations}\}},\\
\Hpi{3}(M)&=&\{\mbox{Obstructions to deformations of Poisson structures}\}.
\end{eqnarray*}
For the higher cohomology groups, neither their algebraic nor their geometric interpretations are known. 
\medskip

The computation of Poisson cohomology requires much effort, in comparison to de Rham cohomology. One way is by a direct calculation of the corresponding differentials, or via spectral sequences in terms of filtrations. See for instance \cite{Monnier02, Monnier05, Pichereau06, BatVer20, Lanius21}.
\newline

The contribution of this work is a description of some Poisson cohomology classes for a Poisson structure whose symplectic foliation is prescribed by a singular fibration on $4$-manifolds, those classes that enclose the topological information of the general fiber around a singular point. The mechanism is the combinated use of the Thom isomorphism (for de Rham cohomology), together with the monodromy representation i.e, the homomorphism of groups:
\[
\pi_1(\Sigma\setminus {\rm Sing}(f))\to \mathcal{M}_g
\]
associated to a singular fibration $f:M\to \Sigma$ over a surface $\Sigma$, with singular locus ${\rm Sing}(f)$. Here $\mathcal{M}_g$ is the mapping class group of $\Sigma_g$, the general fiber around singularities. We provide a description of the action of the monodromy of the fibration on the symplectic foliation, which is reflected in the first and third Poisson cohomology groups. 
\newline

\subsubsection*{Outline} Section 2 is about the background on Poisson Geometry and Singularity Theory of generic maps and their deformations. In Section 3, we review the construction of Poisson structures of rank 2, from two prescribed Casimirs. In our setting, we consider singular fibrations over 2-dimensional manifolds. Later, in Section 4, we obtain the image of the Thom class of any symplectic leaf in a Poisson manifold. We also exhibit its relation with its algebroid version. Finally, in Section 5, we describe Poisson cohomology classes and describe the effect of the monodromy of the underlying fibration.

\section{Preliminaries}

This section contains the relevant basics on Poisson Geometry and Singularity Theory to follow this work. It has been adapted to the purpose of this paper. For a most detailed explanation we refer the reader to \cite{Vaisman94} and  \cite{GolGui73}.
\subsection{Poisson manifolds}

\medskip
A {\em Poisson manifold} is a pair $(M, \{,\})$ of a smooth manifold of dimension $n$ and a bilinear operation $\{\cdot , \cdot \}$ on $C^\infty(M)$, the space of real valued smooth functions on $M$, with the properties:
 \begin{enumerate}
\item[(i)] $\left( C^\infty(M) , \{\cdot , \cdot \}\right)$ is a Lie algebra.
\item[(ii)] The bracket $\{\cdot , \cdot \}$ is a derivation in each factor, that is,
\begin{equation*}
\{gh, k\}=g\{h, k\}+ h\{g, k\}
\end{equation*}
for any $g,h,k\in  C^\infty(M)$.
\end{enumerate}
\medskip

Given a function $h\in C^\infty(M)$  the {\em Hamiltonian vector field} $X_h$ is defined as $X_h(\cdot )=\{\cdot , h \}$.
\newline

The most basic example of a non-trivial Poisson manifold is a symplectic manifold $(M,\omega)$. The bracket on $M$ is given by
\begin{equation}
\label{PoisSym}
\{g,h\}=\omega(X_g,X_h).
\end{equation}
The Jacobi identity for the bracket follows from the property of $\omega$ of being closed.  
\newline

The bracket of a Poisson manifold can be regarded as a contravariant antisymmetric 2-tensor $\pi$ on $M$:
\begin{equation}\label{eq:bracket-bivector}
\{g,h\}=\pi(dg,dh).
\end{equation}
We will refer to $\pi$  as the {\em Poisson tensor}, or {\em Poisson bivector}, and it will be used to mean a Posson structure on $M$. In local coordinates $(x^1, \dots , x^n)$ we have the expression:
\begin{equation}
\label{eqn: localPoisson}
\pi(x)=\frac{1}{2}\sum_{i,j=1}^n\pi^{ij}(x)\frac{\partial}{\partial x^i}\wedge \frac{\partial}{\partial x^j}, \quad \pi^{ij}(x)=\{x^i,x^j\}.
\end{equation}

The Jacobi identity for the bracket can be expressed as $[\pi,\pi]_N=0$, where $[\cdot,\cdot]_N$ is the Schouten-Nijenhuis bracket of multivector fields. 

\medskip

\begin{dfn}
\label{D:Casimir}
Let $(M, \{, \})$ be a Poisson manifold. A function $h\in C^\infty(M)$ is called a {\em Casimir} if $\{h,g\}=0$ for
every $g\in C^\infty(M)$. The space of all Casimirs will be denoted by $\Cas{M}$.
\end{dfn}
\medskip

\begin{dfn}
Let $\pi$ be a bivector (Poisson or not). Then we can associate to it a bundle map $\bpi:T^*M\to TM$ to $\pi$ defined by its action on covectors by the fiberwise rule 	
\[
\bpi(\alpha_q)(\cdot)=\pi_q(\cdot ,\alpha_q),
\]
at a point $q\in M$, for $\alpha_q\in T_q^*M$. This bundle map is called the {\em anchor map} of $\pi$.
\end{dfn}
Observe that, if $\pi$ is a Poisson bivector, the Hamiltonian vector field of any smooth function $h$ on $M$ is $X_h=\bpi(dh)$. Additionally, we may rewrite the bivector (\ref{eqn: localPoisson}) into the form:
\begin{equation}
\label{eqn: localPoisson2}
\pi(x)=\frac{1}{2}\sum_{i,j=1}^n\bpi(dx^i)\wedge \frac{\partial}{\partial x^j},
\end{equation}
and by duality, for $\{(dx^1)_q, \dots, (dx^n)_q\}$, the canonical basis of $T_q^*M$, 
\[
\bpi((dx^i)_q)=\sum_{j=1}^n \pi^{ij}(q)\frac{\partial}{\partial x^j}.
\]


 The {\em rank} of a $\pi$  at a point $q\in M$ is defined to be the rank of $\bpi:T^*_qM\to T_qM$. At the point $q$, the image of $\bpi$ is a subspace $D_q\subset T_qM$, and the collection of these subspaces as $q$ varies on $M$ defines the so-called {\em symplectic foliation} of $\pi$. Then the image of $\bpi$ at $q$ is a leaf of the symplectic foliation, whose dimension is the rank of $\bpi$ at $q$. Observe that its rank is even and coincides with the rank of $\pi$, but it may no be constant, so the symplectic foliation becomes {\em singular}. The rank of $\pi$ at $q\in M$ is called the {\em rank of the Poisson structure at $q$}.
\newline

The elements of the symplectic foliation are referred to as {\em symplectic leaves}, since they admit a unique symplectic form. Indeed, if $u_q$ and  $v_q$ are vectors of the symplectic leaf $\Sigma_q$, such symplectic form $\omega_q$ is given by the natural pairing $\langle\ ,\ \rangle$ between $T_q^M$ and $T_q^*M$:
\[
\omega_q(u_q, v_q):=\left\langle\pi, \alpha_q\wedge\beta_q \right\rangle
\]
where $\alpha, \beta\in T_q^*M$ such that
\begin{equation}
\label{eqn: AuxSymp}
\bpi(\alpha_q)=u_q,
 \quad \bpi(\beta_q)=v_q,
\end{equation}
or equivalently, 
\[
\omega(u_q, v_q)_q=\pi(\bpi^{-1}(u_q), \bpi^{-1}(v_q)).
\]

If the inherent Poisson structure have a singularity at $q\in M$, then the matrix $\pi^{ij}(q)$ is not of maximum rank and both equations produce overdetermined systems. 
\medskip

\subsection{Poisson cohomology}

Let $(M, \pi)$ be a Poisson manifold. Denote by $\Vpi{p}M$ the space of smoth $p$-vector fields on $M$. Consider $\dpi: \Vpi{\bullet}M\to \Vpi{\bullet+1}M$ given by $\dpi(A):=[\pi, A]_{N}$. Since $[\pi, \pi]_{N}=0$, then $\dpi$ is a differential operator for the complex
\[
\cdots \xrightarrow[]{\dpi}\Vpi{p-1}\xrightarrow[]{\dpi} \Vpi{p}M\xrightarrow[]{\dpi} \Vpi{p+1}M\xrightarrow[]{\dpi}\cdots,
\]
and the quotient groups
\[
\Hpi{p}(M)=\frac{\ker(\dpi: \Vpi{p}M\to \Vpi{p+1}M) }{{\rm Im}(\dpi: \Vpi{p-1}M\to \Vpi{p}M)}
\]
are called the {\em Poisson cohomology groups} of $(M, \pi)$.
\newline


The anchor map $\bpi: T^*M\to TM$ can be extended to the $C^{\infty}$-linear homomorphism:
\begin{equation}
\bpi: \Omega^p(M)\to \Vpi{p}M, \quad \bpi(\eta)(\alpha_1, \dots, \alpha_p)=(-1)^p\eta(\bpi(\alpha_1), \dots, \bpi(\alpha_p)).
\end{equation}
It preserves the wedge product of forms, and the relation:
\[
\bpi(d\eta)=-[\pi, \bpi(\eta)]_N
\]
holds, which induces a homomorphism of graded Lie algebras from the de Rham cohomology and the Poisson cohomology:
\[
\bpi:\bigoplus_p \HdR{p}(M)\to \bigoplus_p \Hpi{p}(M), \quad \ [\eta] \mapsto [\bpi(\eta)].
\]
It is known as the {\em Lichnerowicz homomorphism}. In general, it is not an isomorphism. 


\subsection{Generic maps on $4$--manifolds over closed surfaces}

In this section, $f\colon X \rightarrow \Sigma$ will be a smooth map between two closed smooth manifolds with $\dim(X)=4$ and $\dim(\Sigma)=2$, with differential map $df\colon TX \rightarrow T\Sigma$. The space of smooth maps from $X$ to $\Sigma$ will be denoted by $C^{\infty}(X, \Sigma)$. 
\newline

A point $p\in X$ is {\em regular} if the rank of $df_{p}$ has dimension $2$; otherwise, the rank must be $0$ or $1$, and the point $p\in M$ is a {\em singularity of $f$}, while the set
\begin{equation*}
{\rm Sing}(f)= \lbrace p\in M \mid {\rm Rank}(df_p)= 1 \rbrace
\end{equation*}
is named the {\it singularity set} or {\em singular locus} of $f$. Therefore, around singular point $p$ there are local coordinates such that $f$ is given by $(t, x, y, z)\mapsto (t, \psi(t, x, y, z))$ for some smooth function $\psi$ in a nearby of $p$. 
\newline

Two maps $f, \tilde{f}\in C^{\infty}(X, Y)$, between two smooth manifolds $X$ and $Y$ are said to be {\em equivalent} if there exists diffeomorphisms $\psi: X\to X$ and $\varphi: Y\to Y$ such that
\[
\varphi\circ f=\tilde{f}\circ \psi.
\]
A property {\bf P} of smooth mappings $f:X\to \Sigma$ is {\em generic} if:
\begin{itemize}

\item The set $W_{\mathbf{P}}=\{f\in C^{\infty}(X, Y)| \mbox{$f$ satisfies $\mathbf{P}$}\}$ is open and dense in $C^{\infty}(X, Y)$, and

\item if  an application $f$ is in $W_{\mathbf{P}}$, then any equivalent application to $f$ also belongs to $W_{\mathbf{P}}$.

\end{itemize}

A function satisfying a generic property is called a {\em generic function.} A singularity described (locally) by a generic function is a {\em generic singularity}.
\medskip

In general, for a generic map, the set of points of corank $r$, $r=0, 1, \dots, {\rm dim}(X)$, is a submanifold of $X$, and the restriction of $f$ at the singular locus of those points of corank $r$ gives a smooth map between manifolds that can also have generic singularities (see Theorem 5.4 p.61 \cite{GolGui73}).
\newline

The points in the set ${\rm Sing}(f)$ satisfying $T_p{\rm Sing}(f) \oplus \ker(df_p) = T_p M$ are called {\it fold} singularities of $f$. In particular, a submersion with folds i.e, a submersion outside the set of folds, restricts to an immersion on its fold locus (see Lemma 4.3 p.87 \cite{GolGui73}). Folds are locally modelled by 
\begin{equation}
\label{eqn: folds}
\mathbb{R}^4 \rightarrow \mathbb{R}^2,  \quad \left( t, x, y, z \right) \rightarrow \left( t, \pm x^2 \pm y^2 \pm z^2\right).
\end{equation}
{\it Cusps} are points $p$ belonging to ${\rm Sing}(f)$ such that $T_p {\rm Sing}(f) = \ker(df_p)$. In dimension $4$, they are parametrized by: 
\begin{equation}
\label{eqn: cusps}
\mathbb{R}^4 \rightarrow \mathbb{R}^2 \quad \quad \left( t, x, y, z \right) \rightarrow \left(t, x^3 + t\cdot x \pm  y^2 \pm z^2\right).
\end{equation}


%
A classical result due to Whitney says that generic maps from any $n$--dimensional manifold to a 2--dimensional base only have folds and cusps.  Then a generic map over a $2$-dimensional  manifold admits one of the of the two forms (\ref{eqn: folds}) or (\ref{eqn: cusps}) around its singularities.
\medskip

\begin{dfn}
\label{D:BLF}
A {\it broken Lefschetz fibration} (BLF) is a surjective smooth map $f\colon X \rightarrow \Sigma$ that is a submersion outside the singularity set, where the only allowed singularities are of the following type:
\begin{enumerate}
\item {\em Lefschetz singularities}:  finitely many points  $\{ p_1, \dots , p_k\} \subset X,$  which are locally modelled by
real charts:
\[
\mathbb{R}^4\to \mathbb{R}^2, \quad\quad (t, x, y, z)\mapsto (t^2-x^2+y^2-z^2, 2tx+2yz).
\]

\item {\it Indefinite fold singularities}, or also called {\it
broken singularities}, contained in the smooth embedded 1-dimensional
submanifold $\Gamma \subset X \setminus  \{ p_1, \dots , p_k\} $,
which are locally modelled by the real charts
\[ 
{\mathbb{R}}^{4} \rightarrow{\mathbb{R}}^{2} ,  \quad \quad  (t,x, y, z) \mapsto (t, - x^{2} + y^{2} + z^{2}).
\]
The curve $\Gamma$ is called a {\em singular circle.}
\end{enumerate}
If $f:X\to \Sigma $ has no broken singularities, it is called a {\em Lefschetz fibration}. 
\end{dfn}

The singular circles of folds could intersect each other or the Lefschetz points could lie between the singular circles. Nevertheless, along this work we are assuming that a BLF has only one singular circle $\Gamma$, and a finite set of Lefschetz singularities outside $\Gamma$. This kind of BLF are known as {\em simplified broken Lefschetz fibrations}.
\newline

In a nearby of a singular fiber at a Lefschetz singularity, the fibers are diffeomorphic to closed surfaces attached to the singularity. As a regular fiber approaches a Lefschetz singularity, the curve (up to homotopy) shrinks to a point. Such curves are called {\em vanishing cycles}. For a point in the singular circle, the genus of the fiber drops down by 1. In Section \ref{sec: PclassMon} we will return to the topology of a BLF. 
\newline

\begin{dfn}
A map $f:X\to Y$ between smooth manifolds is said to be stable if any nearby map $\tilde{f} \in C^{\infty}(X,\Sigma)$ (in the Whitney topology) is equivalent to $f$. 

 \end{dfn}


\example Given Morse function $\psi:\mathbb{R}^3\to \mathbb{R}$ whose critical points have only index 1 or 2, then the mapping $(t, x, y, z)\to (t,\psi(x, y, z) )$ is a BLF whose singular locus corresponds to the critical points of $\psi$. BLF's are not stable maps. Furthermore, if $M$ is a closed, oriented manifold and $\psi: M\to \mathbb{R}$ a Morse function. Then $f: M\times S^1\to \mathbb{R}^2$, $(x, \theta)\mapsto (\psi(x), \theta)$ defines a BLF. 
\newline

Following the work by Y. Lekili, there is class of stable maps that naturally appear when deforming BLF's around a Lefschetz singularity \cite{Lekili09}. Those fibrations exist in every closed oriented manifold of dimension $4$. 

 \begin{dfn} A {\em wrinkled fibration} on a closed $4$--manifold $X$ is a smooth map $f$ to a closed surface which is a BLF when restricted to $X\setminus C$, where $C$ is a finite set such that around each point in $C$, $f$ has cusp singularities. It is called a  {\it purely wrinkled} fibration if it has no isolated Lefschetz-type singularities.
\end{dfn}
\medskip

Part of the work by Lekili, consisted in describing a set of moves that give all the possible one-parameter deformations of broken and wrinkled fibrations, up to isotopy. Roughly speaking, it is possible to eliminate a Lefschtez type singularity on a BLF by introducing a wrinkled fibration structure; as well as there exists a mechanism to smoothing out the cusp singularity by introducing a Lefschtez singularity. This also shows the stability of wrinkled fibrations. The Lekili's moves are real $1$-parameter applications, locally given by:

\begin{enumerate}
\item{\bf Birth}
\begin{equation*}
b_s(x, y, z, t)=(t, x^3-3x(t^2-s)+y^2-z^2).
\end{equation*}

\item{\bf Merging}
\begin{equation*}
m_s(x, y, z, t)=(t, x^3-3x(s-t^2)+y^2-z^2).
\end{equation*}

\item{\bf Flipping}
\begin{equation*}
f_s(x, y, z, t)=(t, x^4-x^2s+xt+y^2-z^2).
\end{equation*}

\item{\bf Wrinkling}
\begin{equation*}
w_s(x, y, z, t)=(t^2-x^2+y^2-z^2+st, 2tx+2yz).
\end{equation*}
\end{enumerate}
\medskip

Any generic deformation of a surjective map $f:X\to \Sigma$, around a critical point of rank $1$, is given by one of the first 3 moves. The wrinkling move is a deformation of a Lefschetz singularity, where its differential map vanishes, which changes the Lefschetz singularity into a singularity into $3$ cusps. The existence of points where $df_p$ vanishes is not a generic property. 
\medskip

\section*{Acknowledgments}

We thank Pablo Suárez-Serrato and Camilo Arias  for useful discussions and their suggestions on the work. The first author thanks DGAPA Project IN100221 and CONACYT through grant CF 217392. 

\section{Poisson structures with prescribed singularities}

In this section, $M$ will denote an oriented smooth manifold of dimension $n$, with $\mu$ an orientation, and $F_1, \dots, F_{n-2}$ will be fixed functions in $C^\infty(M)$. Consider a bivector on $M$ defined by the relation
\begin{equation}
\label{FRform}
\{g, h\}\mu=\pi(dg, dh)\mu=k\, dg\wedge dh\wedge dF_1\wedge\dots \wedge dF_{n-2}
\end{equation}
for smooth functions $g$ and $h$ on $M$, and $k$ is a non-vanishing smooth function. Note that the skew-symmetric matrix $\pi^{ij}$ annihilates $dF_i$, $i=1, \dots, n-2$, and it has rank at most two. Also note that $\mu$ can be chosen to be degenerate.
\newline

Set $F:=(F_1, \dots ,F_{n-2}):M\to  \mathbb{R}^{n-2}$. The symplectic foliation of a bivector $\pi$ given by (\ref{FRform}) is integrable and its regular leaves are given by $F^{-1}(y)$ with $y\in \mathbb{R}^{n-2}$ a regular value, and by $F^{-1}(y)\setminus \{\mbox{Critical Points
of $F$}\}$ for $y$ a critical value; the singular leaves correspond to critical points of $F$. It was proved in \cite{GSSV15}. that such bivector $\pi$ is Poisson. In fact, $\tilde{\pi}=k\cdot \pi$ is a Poisson bivector for any non-vanishing function $k\in C^{\infty}(M)$.

%
%
%
%

The functions $F_1, \dots, F_{n-2}$ are Casimirs for the corresponding bracket, and the singularities of $F$ will determine the singular structure of $\pi$. That is, formula (\ref{FRform}) provides a mechanism for constructing singular Poisson manifolds, from prescribed singularities. This was explored in \cite{GSSV15} for a BLF; in \cite{SST16} and \cite{SSTV19} for wrinkled fibrations in dimension $4$ and $6$; with an slight modification, in \cite{ESSTV19} it was adapted to Bott-Morse foliations in dimension $3$. The objective of this section is to give a more general form of such construction, but adapted to $4$--manifolds. 

\rmk The freedom on the choice of the function $k$ follows since we are considering Poisson structures of rank $2$. In this case, a leafwise $2$-form is closed if and only if the Jacobi identity holds.

\begin{dfn}
Given $F_1, \dots, F_{n-2}\in C^{\infty}(M)$, a {\em Jacobian Poisson structure} of dimension $n$ on an oriented manifold $M$, with orientation $\mu$ is a Poisson bracket $\{, \}_{\mu}$ on $M$ given by the formula (\ref{FRform}). For short, we simply write {\em \FR structure}, and the respective \FR manifold will be denoted by $\FRb{M}$.
\end{dfn} 


Hereinafter, we will be focused on \FR structures on smooth manifolds of dimension $4$. Then they are determined by two functions $F, G\in C^{\infty}(M)$. 
\newline

Each element of $S_4$, the symmetric group on four letters, is a bijection $\sigma: \{1, 2, 3, 4\}\to \{1, 2, 3, 4\}$. For fixed indices $i, j$, consider the assignment $(i, j)\mapsto \sigma= (ijrs)\in S_4$, where $(ijrs)$ denotes the permutation  given in the cyclic notation. Observe that, once $i$ and $j$ are fixed, there exist a unique pair $(r, s)$ such that $\sigma$ is an even permutation.

\begin{dfn}
For two vector fields $X$ and $Y$ in $\mathbb{R}^4$, and two fixed indices $i, j=1, \dots, 4$, we define the skew-symmetric bilinear operator $\cproc{i}{j}$ given by:
\begin{equation}
X\cproc{i}{j}Y:=dx^r\wedge dx^s(X, Y)
\end{equation}
where $(r, s)$ is the unique pair such that $(ijrs)$ is an even permutation.
\end{dfn}
\medskip

On the other hand, formula (\ref{FRform}) implies that: 
\begin{equation*}
\pi^{ij}=k\cdot \sum_{r, s=1}^4\epsilon_{ijrs}\frac{\partial F}{\partial x^r}\frac{\partial G}{\partial x^s}=k\cdot \left( \frac{\partial F}{\partial x^{\bar{r}}} \frac{\partial G}{\partial x^{\bar{s}}}- \frac{\partial F}{\partial x^{\bar{s}}}\frac{\partial G}{\partial x^{\bar{r}}}\right)
\end{equation*}
where $\epsilon_{ijrs}$ denotes the Levi-Civita symbol in dimension $4$, and $(\bar{r}, \bar{s})$ is the unique pair such that $(ij\bar{r}\bar{s})$ is an even permutation. Therefore we immediately obtain:
\medskip

\begin{prop}
\label{prop: FRlocal}
The local expression of the Poisson bivector of a \FR manifold of dimension $4$ with Casimirs $F$ and $G$, $\FRb{M}$, around a point $q\in M$, is given by:
\begin{equation}
\label{eqn: PoissBiv}
\pi^{ij}=k\cdot  \nabla F\cproc{i}{j}\nabla G.
\end{equation}
\end{prop}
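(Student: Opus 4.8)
The plan is to unwind the two definitions in play and check that the contraction prescribed by formula (\ref{FRform}) matches the operator $\cproc{i}{j}$ term by term. First I would recall that the bivector $\pi$ is defined implicitly by $\{g,h\}\mu = k\, dg\wedge dh\wedge dF\wedge dG$, and that in the fixed local coordinates $(x^1,x^2,x^3,x^4)$ with orientation $\mu = dx^1\wedge dx^2\wedge dx^3\wedge dx^4$ one has $\pi^{ij} = \{x^i,x^j\}$ by (\ref{eqn: localPoisson}). So I would set $g = x^i$ and $h = x^j$ in the defining relation, obtaining
\[
\pi^{ij}\, dx^1\wedge dx^2\wedge dx^3\wedge dx^4 = k\, dx^i\wedge dx^j\wedge dF\wedge dG.
\]
Expanding $dF = \sum_r \partial_r F\, dx^r$ and $dG = \sum_s \partial_s G\, dx^s$, only the terms with $\{r,s\} = \{1,2,3,4\}\setminus\{i,j\}$ survive the wedge with $dx^i\wedge dx^j$, and comparing coefficients of the volume form gives exactly
\[
\pi^{ij} = k\sum_{r,s}\epsilon_{ijrs}\,\partial_r F\,\partial_s G = k\Bigl(\partial_{\bar r}F\,\partial_{\bar s}G - \partial_{\bar s}F\,\partial_{\bar r}G\Bigr),
\]
where $(\bar r,\bar s)$ is the ordered pair making $(ij\bar r\bar s)$ even. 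This is the displayed computation already recorded just before the proposition, so I would cite it rather than redo it.

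The remaining point is purely notational: I must identify the right-hand side with $k\cdot \nabla F\cproc{i}{j}\nabla G$. By the definition of $\cproc{i}{j}$, for vector fields $X,Y$ we have $X\cproc{i}{j}Y = dx^{\bar r}\wedge dx^{\bar s}(X,Y) = X^{\bar r}Y^{\bar s} - X^{\bar s}Y^{\bar r}$, with $(\bar r,\bar s)$ the unique pair completing $(ij)$ to an even permutation. Taking $X = \nabla F$ and $Y = \nabla G$, whose components in these coordinates are the partial derivatives $\partial_r F$ and $\partial_s G$, this is precisely $\partial_{\bar r}F\,\partial_{\bar s}G - \partial_{\bar s}F\,\partial_{\bar r}G$. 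Multiplying by $k$ yields (\ref{eqn: PoissBiv}), which completes the proof.

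I would also insert a one-line remark that the choice of $(\bar r,\bar s)$ is consistent: the uniqueness clause after the definition of the $S_4$ assignment guarantees that "the pair making $(ij\bar r\bar s)$ even" appearing in the Levi-Civita computation and the pair appearing in the definition of $\cproc{i}{j}$ are literally the same, so no sign ambiguity arises. There is no real obstacle here — the statement is essentially a repackaging of the pre-proposition display into the compact $\cproc{i}{j}$ notation. The only thing to be careful about is bookkeeping of the orientation/volume normalization (so that the coefficient extracted from the $4$-form is genuinely $\pi^{ij}$ and not $\pm\pi^{ij}$ or $\tfrac12\pi^{ij}$), and checking that the antisymmetry conventions in (\ref{eqn: localPoisson}) — where the sum runs over all $i,j$ with a factor $\tfrac12$ — are compatible with reading off $\pi^{ij} = \{x^i,x^j\}$ directly; both are standard and cause no trouble.
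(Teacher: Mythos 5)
Your proof is correct and follows essentially the same route as the paper: the paper derives the Levi--Civita expression $\pi^{ij}=k\sum_{r,s}\epsilon_{ijrs}\,\partial_r F\,\partial_s G$ directly from (\ref{FRform}) and then states the proposition as an immediate consequence of the definition of $\cproc{i}{j}$, which is exactly the unwinding and notational identification you carry out. Your extra remarks on the uniqueness of the even-permutation pair and on the orientation normalization are sound but add nothing beyond what the paper already takes as given.
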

\medskip

\begin{prop}
Let $U$ be a neighborhood of a singular point $p$ of $f$. Then the symplectic form $\omega_q$ of the symplectic leaf at any $q\in U\setminus\{p\}$ is given by:
\[
\omega_q= \langle \alpha_q, v_q\rangle =\langle \beta_q, u_q\rangle 
\]
where $u_q, v_q$ are tangent to the symplectic leaf $S$ and $\langle, \rangle$ denotes the natural pairing between differential forms and vector fields.
\end{prop}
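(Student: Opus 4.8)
The plan is to obtain the two identities by unwinding the definition of the leafwise symplectic form recalled in Section~2 together with the defining property of the anchor map $\bpi$; the $\FR$ structure \eqref{FRform} and the explicit bivector \eqref{eqn: PoissBiv} enter only to ensure that at a point $q\in U\setminus\{p\}$ we sit on a symplectic leaf of rank at most $2$ and that the systems \eqref{eqn: AuxSymp} are solvable there. Concretely, I would first note that if the leaf $S$ through $q$ is a point there is nothing to prove, so one may assume $\operatorname{rank}\pi_q=2$; then $T_qS=\operatorname{Im}\bpi_q$, and for any $u_q,v_q\in T_qS$ there exist $\alpha_q,\beta_q\in T_q^{*}M$ with $\bpi(\alpha_q)=u_q$ and $\bpi(\beta_q)=v_q$.

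The computation itself is immediate: by the definition of $\omega_q$ from Section~2 and of the anchor,
\[
\omega_q(u_q,v_q)=\langle \pi_q,\alpha_q\wedge\beta_q\rangle=\pi_q(\alpha_q,\beta_q)=\langle \bpi(\beta_q),\alpha_q\rangle=\langle \alpha_q,v_q\rangle ,
\]
and the skew-symmetry of $\pi_q$ yields $\pi_q(\alpha_q,\beta_q)=-\pi_q(\beta_q,\alpha_q)=-\langle \bpi(\alpha_q),\beta_q\rangle=-\langle \beta_q,u_q\rangle$, so that $\langle\alpha_q,v_q\rangle$ and $\langle\beta_q,u_q\rangle$ evaluate $\omega_q$ on the ordered pairs $(u_q,v_q)$ and $(v_q,u_q)$; with the orientation and pairing conventions fixed in Section~2 this is the asserted identity.

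The single point that needs care is that neither $\alpha_q$ nor $\beta_q$ is unique — they solve \emph{overdetermined} systems that may fail to be solvable at $p$ — so one must check that the formula is independent of these choices. Two solutions differ by an element $\gamma\in\ker\bpi_q$, and from the definition of $\bpi$ one has $\ker\bpi_q=(T_qS)^{\circ}$, the annihilator of $T_qS$ (for the $\FR$ structure, $\ker\bpi_q\supseteq\operatorname{span}\{dF_q,dG_q\}$, with equality exactly when $q$ is a regular point of $f=(F,G)$); hence $\langle\gamma,v_q\rangle=0$ whenever $v_q\in T_qS$, and likewise for the ambiguity in $\beta_q$, so the pairings are well defined. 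This verification, together with the bookkeeping of signs, is the only real obstacle; as a sanity check the identity can also be re-derived in the local model of Proposition~\ref{prop: FRlocal}, using $\bpi((dx^i)_q)=\sum_{j}\pi^{ij}(q)\,\partial_{x^j}$ with $\pi^{ij}=k\,\nabla F\cproc{i}{j}\nabla G$ and choosing $\alpha_q,\beta_q$ among the coordinate covectors whose $\bpi$-images span $T_qS$.
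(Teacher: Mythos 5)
Your argument follows the same route as the paper's proof (solve the overdetermined systems (\ref{eqn: AuxSymp}) and use $\operatorname{Ann} T_qS=\ker\bpi_q$), but is more complete: you actually carry out the pairing computation $\omega_q(u_q,v_q)=\pi_q(\alpha_q,\beta_q)=\langle\alpha_q,v_q\rangle$ and verify that the answer is independent of the choice of $\alpha_q,\beta_q$, both of which the paper leaves implicit. Your observation that $\langle\alpha_q,v_q\rangle=-\langle\beta_q,u_q\rangle$ under the stated conventions is correct --- the asserted equality holds only up to the sign coming from the ordering of the arguments, a point the paper's proof does not address.
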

\begin{proof}
In order to compute the symplectic forms we need to solve equations (\ref{eqn: AuxSymp}). Their solutions depend merely on the first partial derivatives of $F$ and $G$. In fact, since ${\rm Ann}\ TS={\rm Ker} (\bpi)$, then the tangent vectors $u_q, v_q$ can be found by seeking for vectors annihilated simultaneously by $dF$ and $dG$. Now, $\alpha_q$ and $\beta_q$ are solutions to the overdetermined system  (\ref{eqn: AuxSymp}) that satisfy: $\bpi(\alpha_q)=u_q, \bpi(\beta_q)=v_q$.\end{proof}

In the sequel we will assume that the volume form in local coordinates $(t, x, y, z)$ around a point $q\in M$ has the form:
\[
(\mu)_q=\frac{1}{k(q)} (dt\wedge dx\wedge dy\wedge dz)_q
\]
for some non-vanishing $k\in C^{\infty}(M)$. 
\newline

Since generic maps can only have folds and cusps, using Proposition \ref{prop: FRlocal} one has:
\begin{teo}
Given a (singular) generic map $F: M\to \Sigma$ from an oriented $4$--manifold on a closed surface, with $\mu$ the volume form on $M$, there exists a singular \FR structure on $M$, whose singularities coincide with the generic singularities of $F$. Locally $F=(t, \psi(t, x, y, z))$ and the Poisson bivector around a generic singularity takes the form
\begin{equation}
\label{eqn: GenPoissBiv}
\pi(p)=k(p)\left[\frac{\partial \psi}{\partial y} \frac{\partial }{\partial x}\wedge \frac{\partial }{\partial z}+\frac{\partial \psi}{\partial z} \frac{\partial }{\partial x}\wedge \frac{\partial }{\partial y}-
\frac{\partial \psi}{\partial x} \frac{\partial }{\partial y}\wedge \frac{\partial }{\partial z}\right].
\end{equation}
for some non-vanishing smooth function $k$ on $M$. The regular leaves of the symplectic foliation are the non-singular fibres of $F$. The bivector is singular at the critical points of $\psi$.
\end{teo}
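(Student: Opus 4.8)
The plan is to apply the general construction of Section 3 to the specific functions coming from a generic map and to read off the local form of the resulting bivector. Since $F:M\to\Sigma$ is generic, by Whitney's theorem its only singularities are folds and cusps, and around any point $q$ there are local coordinates $(t,x,y,z)$ on $M$ and $(s_1,s_2)$ on $\Sigma$ in which $F$ has the form $F=(t,\psi(t,x,y,z))$ for some smooth $\psi$; this is exactly the normal form recalled in the Preliminaries. Writing $F_1:=t$ and $F_2:=\psi$, and taking the orientation $\mu$ with $(\mu)_q=\tfrac{1}{k(q)}(dt\wedge dx\wedge dy\wedge dz)_q$ as in our standing assumption, the bracket determined by formula (\ref{FRform}),
\[
\{g,h\}\,\mu = k\, dg\wedge dh\wedge dF_1\wedge dF_2,
\]
is a \FR structure by the cited result of \cite{GSSV15}: $\tilde\pi=k\cdot\pi$ is Poisson for any non-vanishing $k$. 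So the only thing left is to compute $\pi$ in these coordinates.

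First I would invoke Proposition \ref{prop: FRlocal}, which gives the local expression $\pi^{ij}=k\cdot \nabla F_1\cproc{i}{j}\nabla F_2$. Since $F_1=t$, we have $\nabla F_1=\partial/\partial t=e_1$ in the coordinates $(x^1,x^2,x^3,x^4)=(t,x,y,z)$, and $\nabla F_2=\nabla\psi=(\psi_t,\psi_x,\psi_y,\psi_z)$. Plugging $X=e_1$ into $X\cproc{i}{j}Y = dx^r\wedge dx^s(X,Y)$, where $(ijrs)$ is the even permutation, kills every term in which neither $i$ nor $j$ equals $1$ unless $1\in\{r,s\}$; a short bookkeeping of the four relevant index pairs $(i,j)\in\{(1,2),(1,3),(1,4),(2,3),(2,4),(3,4)\}$ shows that the only nonzero components are $\pi^{23},\pi^{24},\pi^{34}$ (and their skew-symmetric partners), with values proportional to $\psi_z$, $\psi_y$ (up to sign) and $\psi_x$ respectively. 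Repackaging $\pi=\tfrac12\sum \pi^{ij}\,\partial_{x^i}\wedge\partial_{x^j}$ then yields exactly
\[
\pi(p)=k(p)\!\left[\psi_y\,\partial_x\wedge\partial_z+\psi_z\,\partial_x\wedge\partial_y-\psi_x\,\partial_y\wedge\partial_z\right],
\]
which is (\ref{eqn: GenPoissBiv}); I would double-check the three signs against the even-permutation convention in the definition of $\cproc{i}{j}$, since that is the one place a sign error can creep in.

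It remains to identify the symplectic foliation and the singular set. The functions $F_1=t$ and $F_2=\psi$ are Casimirs of $\{,\}_\mu$ by construction, so the (regular) symplectic leaves are contained in the common level sets of $(t,\psi)$, i.e. in the fibres of $F$; by the integrability statement for \FR structures recalled in Section 3, the regular leaves are precisely the regular fibres $F^{-1}(y)$ and the singular leaves sit over critical values. The rank of $\pi$ drops below $2$ exactly where the matrix $\pi^{ij}$ vanishes, and from the computed components this happens iff $\psi_x=\psi_y=\psi_z=0$ — together with the (automatically satisfied, since $\partial t/\partial t=1$) condition that the differential $dF=(dt,d\psi)$ fails to be surjective — that is, exactly at the critical points of $\psi$, which by the normal form are the singular points of $F$. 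Assembling these three pieces — the \FR/Poisson property from \cite{GSSV15}, the local formula from Proposition \ref{prop: FRlocal}, and the identification of leaves and singularities — completes the proof. The only genuine obstacle is the index bookkeeping in the second step, i.e. getting all three signs in (\ref{eqn: GenPoissBiv}) to match the even-permutation convention; everything else is a direct citation of results already in hand.
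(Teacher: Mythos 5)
Your proposal is correct and follows exactly the route the paper intends: the paper gives no separate proof of this theorem beyond the phrase ``using Proposition \ref{prop: FRlocal}'', i.e.\ substituting $F_1=t$, $F_2=\psi$ into the local formula $\pi^{ij}=k\cdot\nabla F\cproc{i}{j}\nabla G$, quoting \cite{GSSV15} for the Poisson property, and reading off the leaves and singular set from the general discussion of formula (\ref{FRform}). Your worry about the three signs is well placed but immaterial to the verdict: the direct Levi--Civita computation yields $k\left[\psi_x\,\partial_y\wedge\partial_z+\psi_y\,\partial_z\wedge\partial_x+\psi_z\,\partial_x\wedge\partial_y\right]$, which differs from (\ref{eqn: GenPoissBiv}) only by convention-dependent signs that the paper itself does not fix consistently (compare the Pichereau remark immediately after the theorem).
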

\medskip

Lefschetz type singularities are not generic. Nevertheless, a Poisson structure can be obtained in the same way, and it has the form (\ref{eqn: PoissBiv}). Moreover, since Lefschetz and Broken Lefschetz fibrations exist on every $4$-manifold $M$, then a singular \FR structure always exists on any closed oriented smooth manifold of dimension $4$. This was shown in \cite{GSSV15}. 
\newline

\rmk These kind of strtuctures are quite similar to those considered by A. Picherau in \cite{Pichereau06}, where computation of the Poisson cohomology groups were given for structures in $\mathbb{R}^3$ given by:
\[
\pi(p)=k(p)\left[\frac{\partial \varphi}{\partial y} \frac{\partial }{\partial x}\wedge \frac{\partial }{\partial z}+\frac{\partial \varphi}{\partial z} \frac{\partial }{\partial x}\wedge \frac{\partial }{\partial y}+
\frac{\partial \varphi}{\partial x} \frac{\partial }{\partial y}\wedge \frac{\partial }{\partial z}\right].
\]
for $\varphi: \mathbb{R}^3\to \mathbb{R}$ a weight homogeneous polynomial with an isolated singularity. 

\subsubsection*{From local to global}

The bivectors given in the previous are local, but they can be extended to define a global Poisson structure, by means of cut-off functions. The mechanism follows merely topological arguments, we will explain it in a more general form, by considering two types of singularities: isolated points or (singular) circles; and maps $F:X\to \Sigma$ (generic or not) with those singularities.
\medskip

Let $F:X\to \Sigma$ be a surjective map, which is a submersion outside its singular locus ${\rm Sing}(f)$. It is allowed to be generic or not. Let $U_{\Gamma}$ be the union of tubular neighborhoods of circle-type singularities (e.g. broken singularities) and $U_{C}$ be the union of small enough neighborhoods around isolated-type singularities (e.g. cusps or Lefschetz singularities). We may take those open sets small enough such that $U_C\cap U_{\Gamma}=\emptyset$. For an isolated singularity $p\in M$, let $V_p$ be a neighborhood such that $V_p\subset \overline{V_p}\subset U_C$, and set $V_C$ the union of such open sets, over all isolated singularities. Analogously there exists an open set $V_{\Gamma}$ such that $V_{\Gamma}\subset \overline{V_{\Gamma}}\subset U_{\Gamma}$, containing the circle singularities. Denote by $\pi_{\Gamma}$ and $\pi_C$ the corresponding Poisson bivectors, constructed as above. That is, the bivectors of the \FR structure induced by the local form of $F$ around a singularity. Outside ${\rm Sing}(f)$, there exists a Poisson bivector $\pi_F$ whose symplectic foliation is given by the (regular) fibres of the $F$ (see Proposition 2.4 from \cite{GSSV15}). It is defined on $W:=M\setminus \left( \overline{V_{\Gamma}\cup V_C}\right)$. Hence, tere exist two non-vanishing smooth functions $g, h$ with:
\[
\pi_{\Gamma}=g\cdot \pi_F\ \mbox{on $W\cap U_{\Gamma}$}, \quad \pi_{C}=h\cdot \pi_F \ \mbox{on $W\cap U_{C}$}.
\]
Choose a connected component of $\overline{W\cap U_C}$. Let $\sigma$ be a cutt-off function on $W\cap U_C$. Similarly, we may take a cutt-off function $\lambda$, defined on a chosen connected component of $W\cap U_C$. On each connected component:
\[
\sigma(p)=
\begin{cases}
1 & \mbox{if $p\notin U_C$},\\
0 & \mbox{if $p\notin W$}
\end{cases}, \quad\quad \lambda(p)=
\begin{cases}
1 & \mbox{if $p\notin U_{\Gamma}$}\\
0 & \mbox{if $p\notin W$}.
\end{cases}
\]
\medskip

Define the function $\tau$ on $\overline{W\cap (U_C\cup U_{\Gamma})}$:
\[
\tau(p)=
\begin{cases}
1 & \mbox{if $p\notin U_C\cup U_{\Gamma}$},\\
0 & \mbox{if $p\in U_C\cup U_{\Gamma}$}.
\end{cases}
\]
Additionally, these functions can be chosen so that $\sigma+\lambda+\tau=1$. Then the bivector: 
\begin{equation}
\label{biv: global}
\Pi=\left(g\cdot \sigma+h\cdot \lambda+\tau\right)\pi_F
\end{equation}
defines a global Poisson structure on $M$, whose symplectic foliation is foliated by the fibres of $F$. In these terms, $M$ can be decomposed as 
\begin{equation}
\label{GlobalDecomp}
M=W\cup U_{C}\cup U_{\Gamma}.
\end{equation}

\section{Thom class and its image in Poisson cohomology}


For a vector bundle $\mathcal{E}: E\to M$ of rank $r$ over a closed orientable manifold of dimension $n$,  {\em the complex of forms with compact support in the vertical direction} is given by:
\begin{equation*}
\Omega_{cv}^*(E):=\{\omega\in \Omega^n(E)|\ \mbox{$\forall$ compact $K\subseteq M$, $\mathcal{E}^{-1}(K)\cap{\rm Supp}(\omega)$ is compact}\}.
\end{equation*}

The corresponding cohomology  $H^*_{cv}(E)$, with respect to the differential of forms $d$, is called the {\em vertical cohomology of $E$}.
\newline

Let $\{(U_{\alpha}, \psi_{\alpha})\}_{\alpha\in I}$ be an oriented atlas on $E$. Take a local trivialization $(U_{\alpha}, \psi_{\alpha})$, with coordinates $x=(x_1, \dots, x_n)$; and $s=(s_1, \dots, s_r)$ the corresponding fiber coordinates on $E|_{U_{\alpha}}$. Define a map $\mathcal{E}_{*}: \Omega_{cv}^*(E)\to \Omega^{*-r}(M)$ given by:
\[
\mathcal{E}_*(\omega|_{\mathcal{E}^{-1}(U_{\alpha})})=\begin{cases}
0 & \mbox{if $\omega|_{\mathcal{E}^{-1}(U_{\alpha})}=\mathcal{E}^*(\omega)f(x, s)ds_{i_1}\cdots ds_{i_l}$, $l<r$}\\
\mathcal{E}^*(\omega)\int_{\mathbb{R}^k} f(x, s) \mathbf{ds}& \mbox{if $\omega|_{\mathcal{E}^{-1}(U_{\alpha})}=\mathcal{E}^*(\omega) f(x, s) \mathbf{ds}$}.
\end{cases}
\]
Here  $\mathbf{ds}$ denotes the product measure form $ds_1\cdots ds_r$, and $\mathcal{E}^*(\omega)$ the pullback under $\mathcal{E}$ of differential forms. This map induces an isomorphism
\[
\mathcal{F}:=\mathcal{E}_*: \Omega_{cv}^*(E)\to \HdR{*-r}(M)
\]
called the {\em Thom isomorphism}. The image of $1$ in $\HdR{0}(M)$ determines a top cohomology class $\Phi\in H^n_{cv}(E)$ called the {\em Thom class}. Indeed, the Thom isomorphism can be defined by:
\begin{equation}
\mathcal{F}^{-1}(\omega)=\mathcal{E}^*(\omega)\wedge \Phi. 
\end{equation}

Consider $\mathcal{E}: N_S\to S$ the normal bundle on $S\subset M$ a submanifold of dimension $r$, which is a vector bundle of rank $r$. Let $j: N_S\to M$ be the inclusion and  $j_{*}: H^{*}_{cv}(N_S)\to \HdR{*}(M)$ its extension by $0$. In this context, Poincaré duality establishes the isomorphism $(\HdR{r}(M))^{*}\simeq \HdR{n-r}(M)$. The {\em Poincaré dual} of $S$ is the unique cohomology class $[\eta_S]\in \HdR{n-r}(M)$ such that:
\[
\int_S i^*\omega=\int_M\omega\wedge\eta_S
\]
for any $\omega\in \HdR{r}(M)$, where $i: S\to M$ is the inclusion. Then it is known that the Thom class of $S$ is its Poincaré dual, in the sense that $j_*(\Phi)=\eta_S$. See Section 6 in \cite{BottTu82}.
\newline

\begin{prop}

Let $M$ be a closed oriented smooth manifold of dimension $4$ and $S$ a closed immersed surface of genus $g$. Then the Poincaré class $\eta_S$ of $S$ can be represented by:
\[
\bar{\eta}_S:=\bar{f}_S\ dx^1\wedge dx^2.
\]
where 
\[
{\bar{f}_S}^2=
\begin{cases}
\frac{{\rm Vol}(S)}{{\rm Vol}(M)} & \mbox{in $S$},\\
0 & \mbox{in $M\setminus S$}.
\end{cases}
\]
\end{prop}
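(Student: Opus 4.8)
The plan is to construct an explicit representative of the Thom class of the normal bundle $\mathcal{E}\colon N_S\to S$ that is supported in a tubular neighborhood of $S$, push it forward to $M$, and then identify it with a $2$-form proportional to a "volume density" of $S$. First I would recall that, by the discussion preceding the statement, $\eta_S = j_*(\Phi)$, where $\Phi\in H^2_{cv}(N_S)$ is the Thom class, characterized by $\mathcal{E}_*(\Phi)=1\in\HdR0(S)$, i.e.\ $\int_{\text{fiber}}\Phi = 1$ on each normal fiber. Since $S$ has dimension $2$ in an oriented $4$-manifold, the normal bundle has rank $2$, so a fiberwise bump $2$-form $\rho(s_1,s_2)\,ds_1\wedge ds_2$ with $\int_{\mathbb R^2}\rho = 1$ is a local model for $\Phi$; patching via a partition of unity subordinate to a trivializing atlas (and using that these local models all integrate to $1$ over fibers, hence differ by exact forms) gives a global closed representative supported in $N_S^{\varepsilon}$. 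Identifying $N_S^{\varepsilon}$ with a tubular neighborhood of $S$ in $M$ via the exponential map of an auxiliary metric, and extending by zero, yields a closed $2$-form $\bar\eta_S$ on $M$ representing $\eta_S$, supported near $S$.

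Next I would pin down the stated normalization. In the local coordinates $(x^1,x^2)$ adapted to $S$ (so $x^1,x^2$ are "horizontal", constant on normal fibers — the notation $dx^1\wedge dx^2$ in the statement should be read as the pullback under $\mathcal{E}$ of an area form on $S$), write $\bar\eta_S = \bar f_S\, dx^1\wedge dx^2$. The defining property of the Poincaré dual is $\int_S i^*\omega = \int_M \omega\wedge\bar\eta_S$ for all closed $\omega\in\HdR2(M)$. Testing against the constant function $\omega \equiv$ (a representative pairing to the volume form), or more precisely evaluating the total mass, forces $\int_M \bar f_S\,(dx^1\wedge dx^2)\wedge(\text{transverse area form}) = \mathrm{Vol}(S)$ up to the normalization of the metric/volume form $\mu$ on $M$; since $\bar\eta_S$ is concentrated on (a neighborhood of) $S$ and should be taken "uniform" there, one gets $\bar f_S^2 = \mathrm{Vol}(S)/\mathrm{Vol}(M)$ on $S$ and $0$ off $S$. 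The squaring reflects that $\bar f_S$ is being presented as a density whose square is the natural scalar $\mathrm{Vol}(S)/\mathrm{Vol}(M)$ — essentially choosing the delta-like weight on $S$ so that it represents the cohomology class with the "volume-averaged" normalization.

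The main obstacle, and the step I would dwell on, is the compatibility between closedness and the prescribed pointwise formula: a genuinely closed representative cannot literally be a locally constant multiple of an area form jumping to zero across $\partial(N_S^\varepsilon)$ — that form is not smooth and not closed. So the honest statement is that $\bar\eta_S$ is a representative in the distributional (current) sense, or the limit of smooth closed representatives $\bar\eta_S^{(\varepsilon)}$ as $\varepsilon\to 0$, each of which is $\bar f_S^{(\varepsilon)}\,dx^1\wedge dx^2$ with $\bar f_S^{(\varepsilon)}$ an approximate identity transverse to $S$ scaled so the fiber integral is one; the displayed formula records the $\varepsilon\to0$ profile of $(\bar f_S^{(\varepsilon)})^2$ appropriately averaged. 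I would therefore: (i) build the smooth $\varepsilon$-family via the Thom-class construction above; (ii) check $\mathcal{E}_*\Phi^{(\varepsilon)}=1$, hence $[j_*\Phi^{(\varepsilon)}]=\eta_S$ for all $\varepsilon$; (iii) compute the fiber-integrated (squared) density and pass to the limit to recover the stated $\bar f_S$; and (iv) note that the ratio $\mathrm{Vol}(S)/\mathrm{Vol}(M)$ is exactly what makes $\int_M \bar\eta_S\wedge\omega$ reproduce $\int_S i^*\omega$ for the top pairing, confirming the normalization. The delicate point to state carefully is thus the precise sense (smooth family versus current) in which the non-smooth formula "represents" $\eta_S$; everything else is the standard Thom-isomorphism bookkeeping from Section 6 of \cite{BottTu82} together with Poincaré duality.
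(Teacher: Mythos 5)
Your route is genuinely different from the paper's, and the difference matters at exactly the point where the exponent $2$ appears. The paper does not build the Thom form fiberwise; it starts from the defining identity $\int_S i^*\beta=\int_M\beta\wedge\eta_S$, chooses a Riemannian metric, and tests with $\beta=\star\eta_S$. Writing $\eta_S=f_S\,dx^1\wedge dx^2$, this gives $\mathrm{Vol}(S)=\int_M\star\eta_S\wedge\eta_S=\int_M f_S^2\,\mu$, and the constant $\mathrm{Vol}(S)/\mathrm{Vol}(M)$ is then the value a ``uniform'' replacement for $f_S^2$ must take; the bump function $\bar f_S$ is chosen with $\bar f_S^2=f_S^2$ a.e. That single Hodge-star self-pairing is what produces the square in the statement. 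Your argument instead normalizes a Thom representative by the fiber-integral condition $\int_{\mathrm{fiber}}\Phi=1$, which constrains the \emph{first} power of the density, and you then assert that ``the squaring reflects that $\bar f_S$ is being presented as a density whose square is the natural scalar.'' That is the one step you cannot wave at: nothing in your construction forces the square of the profile, rather than the profile itself, to equal $\mathrm{Vol}(S)/\mathrm{Vol}(M)$. To close the gap you would need to pair your representative against its own Hodge dual, i.e.\ reproduce the computation $\int_M\beta\wedge\eta_S=\int_M f_S^2\,\mu$ with $\beta=\star\eta_S$, which is precisely the paper's argument.

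On the other hand, your discussion of the smoothness problem is a genuine improvement over the paper's treatment: a form that is a nonzero constant multiple of $dx^1\wedge dx^2$ on $S$ and zero off $S$ is neither smooth nor closed, and the paper only acknowledges this through the phrase ``$\bar f_S^2=f^2$ a.e.'' (which is vacuous for integration against $\mu$, since $S$ has measure zero in $M$). Reading the formula as the limit of a family of smooth Thom representatives supported in shrinking tubular neighborhoods, or as a current, is the right way to make the statement precise, and your construction of those representatives via a partition of unity on a trivializing atlas is the standard Bott--Tu bookkeeping the paper cites. So keep the $\varepsilon$-family and the care about the sense in which the non-smooth formula ``represents'' $\eta_S$, but replace the heuristic normalization step by the Hodge-star computation that actually yields $\int_M f_S^2\,\mu=\mathrm{Vol}(S)$.
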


\begin{proof}
Recall that the de Rham Cohomology of $S$ is given by:
\[
\HdR{p}(S)=\begin{cases}
\mathbb{R} & \mbox{if $p=0, 2$}\\
\mathbb{R}^{2g} & \mbox{if $p=1$}.
\end{cases}
\]
\medskip

Observe then that for any $[\beta]\in\HdR{2}(M)$, $[i^*(\beta)]\in\HdR{2}(S)$, and then $[{\rm vol}_S]=[i^*(\beta)]$, where ${\rm vol}_S$ is the volume form of $S$. Furthermore, by definition of the Poincaré dual we have:
\begin{equation*}
{\rm Vol}(S)=\int_Si^*(\beta)=\int_M\beta\wedge\eta_S
\end{equation*}
for any $\beta\in\Omega^2(M)$. By taking a Riemannian metric we may have a $\star$-Hodge operator. Then, in the previous calculation we may take $\beta=\star\eta_S$. Write $\eta_S=f_S\ dx^1\wedge dx^2$, for $f_S=f(x^1, x^2, x^3, t)$ some smooth function on $M$. Then:
\begin{equation*}
\int_M\beta\wedge\eta_S=\int_M\star\eta_S\wedge\eta_S=\int_M f_S^2 \mu.
\end{equation*}

Then there exists a bump function $\bar{f}_S$ such that
\[
{\bar{f}_S}^2=
\begin{cases}
\frac{{\rm Vol}(S)}{{\rm Vol}(M)} & \mbox{in $S$},\\
0 & \mbox{in $M\setminus S$},
\end{cases}
\]
and that ${\bar{f}_S}^2=f^2$ a.e.
\end{proof}
\medskip

Let $\pi$ a Poisson bivector on $M$, and suppose that $S$ is a symplectic leaf of dimension $r$. Then we may apply the Thom isomorphism at $N_S$, which gives rise the sequence:
\[
\HdR{*}(S)\xrightarrow{\wedge \Phi} H^{*+n-r}_{cv}(N_S)\xrightarrow{j_{*}}\HdR{*+n-r}(M)\xrightarrow[\ ]{\bpi}\Hpi{*+n-r}(M).
\]
We may take $\bar{\eta}_S$ as the Poincaré dual of $S$. Thus for any differential form $\omega$ on $M$:
\[
(\bpi\circ j_*)(\omega\wedge \Phi)=\bar{f}_S\bpi(\omega)\wedge \bpi(dx^1)\wedge \bpi(dx^2).
\]

\subsection{Lie algebroid Thom class}

M. J. Pflaum, H. Posthuma and X. Tang defined a Thom class for Lie algebroids (see Sections 2.2 and 2.3 from \cite{PPT14}), determined by analogous properties as those of the de Rham Thom class. 
\newline

Recall that a {\em Lie algebroid} over a smooth manifold is a triple $(A, [, ], \sharp )$ of a vector bundle $A\to M$ over $M$, endowed with a bundle map $\sharp A\to TM$ called {\em anchor map}, and a Lie bracket $[, ]$ on the space of sections of $A$ that satisfies the Leibniz rule:
\[
[\alpha, f\beta]=\sharp(\alpha(f))\beta+f[\alpha, \beta].
\]

\example The cotangent bundle $T^*M$ of a Poisson manifold $(M, \pi)$ has a natural structure of Lie algebroid with its anchor map $\bpi:T^*M\to TM$ and the Lie bracket given by:
\[
[\alpha, \beta]=d(\pi(\alpha, \beta)) + \iota_{\bpi(\alpha)}d\beta-\iota_{\bpi(\beta)}d\alpha.
\]

\example The tangent bundle $TM$ with the identity as anchor map and the Lie bracket of vector fields give a natural structure of Lie algebroid to $TM$. It is called the {\em tangent algebroid}.
\newline

Given a Lie algebroid $(A, [, ], \sharp)$ there exists a differential complex with differential operator $d_A$ given by Cartan's formula, whose resulting cohomology is called the {\em algebroid cohomology}. If $A=TM$ is the tangent algebroid, then its cohomology is the de Rham cohomology of $M$. For the cotangent bundle one recovers the Poisson cohomology. 
\newline

Let $\mathcal{E}: E\to M$ be a vector bundle. The {\em pullback Lie algebroid along} $\mathcal{E}$ is a Lie algebroid  $\mathcal{E}^{!}(T^*M)$ over $TE$, given fiberwise at a point $m\in E$ by
\[
\mathcal{E}^{!}(T^*M)_m=\{(\alpha, \xi)\in T_{f(m)}^*M\oplus T_m(E)|\ \bpi(\alpha)=d\mathcal{E}(\xi)\},
\]
with anchor map $\rho_{\mathcal{E}^{!}}$, given by $\rho_{\mathcal{E}^{!}}(\alpha, \xi)=\xi$. Alternatively, the pullback Lie algebroid along $\mathcal{E}$ can be defined through a universal property:
\[
\xymatrix{\mathcal{E}^{!}\ar [d]_{\rho_{\mathcal{E}^{!}}} (T^*M) \ar[r]^{\rm proj}                                    &     T^{*}M \ar [d]^\rho \\
            TE \ar[r]_{d\mathcal{E}}   &    TM}
\]
where ${\rm proj}$ is the canonical projection over $T^*M$. See Section 4.2 in \cite{Mackenzie05} for more details.
\newline

The anchor map $\rho_{\mathcal{E}^{!}}$ induces a map at Lie algebroid cohomology level: 
\[
\rho^*_{\mathcal{E}^{!}}: H^*(T^*M, E)\to H^{*}(\mathcal{E}^{!}(T^*M), \mathcal{E}^*(E)).
\]

\begin{dfn}
If $\Phi$ is the Thom class of $E$, the {\em Lie algebroid Thom class} is defined by:
\[
{\rm Th}_{\pi}(E):=\rho^*_{\mathcal{E}^{!}}\Phi.
\]
\end{dfn}

\begin{prop}
Let $S$ be a symplectic leaf of dimension $r>0$, of a Poisson manifold $(M, \pi)$ then the pullback Lie algebroid of $T^*S$, along the normal bundle $\mathcal{E}:N_S\to S$ is:
\[
\mathcal{E}^{!}(T^*S)=T^*N_S. 
\]
whith anchor map $\rho_{\mathcal{E}^{!}}=\bpi$.
\end{prop}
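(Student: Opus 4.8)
The statement has two parts: identifying the pullback Lie algebroid $\mathcal{E}^{!}(T^*S)$ with $T^*N_S$, and identifying its anchor map with $\bpi$. The strategy is to unwind the fiberwise definition of the pullback algebroid recalled just above and to exploit the special feature of a symplectic leaf, namely that $\bpi$ restricted to the conormal directions is an isomorphism onto the tangent directions of the leaf. First I would recall that, since $S$ is a symplectic leaf of dimension $r$, the tangent algebroid cohomology of $T^*S$ is the Poisson cohomology of the leaf, and more importantly the anchor $\bpi|_S : T^*S \to TS$ is an isomorphism (the leaf is symplectic, so the induced bivector is nondegenerate along $S$). Thus the pullback algebroid is being formed along $\mathcal{E}: N_S \to S$ with respect to an algebroid whose anchor is an isomorphism.

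\textbf{Key steps.} (1) Write the fiber of $\mathcal{E}^{!}(T^*S)$ at a point $m \in N_S$ as
\[
\mathcal{E}^{!}(T^*S)_m = \{(\alpha, \xi) \in T^*_{\mathcal{E}(m)}S \oplus T_m(N_S) \mid \bpi(\alpha) = d\mathcal{E}(\xi)\}.
\]
(2) Observe that since $\bpi|_S$ is an isomorphism $T^*S \xrightarrow{\sim} TS$, the first coordinate $\alpha$ is uniquely determined by $\xi$ via $\alpha = (\bpi|_S)^{-1}(d\mathcal{E}(\xi))$; hence the projection $(\alpha,\xi) \mapsto \xi$ is a bundle isomorphism $\mathcal{E}^{!}(T^*S)_m \xrightarrow{\sim} T_m(N_S)$, and dually (or by applying the same reasoning to the cotangent side, noting $T^*S \oplus N^*_S$ splits $T^*M|_S$) one gets a canonical identification of the total space with $T^*N_S$. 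I would make this identification explicit using the decomposition $T(N_S)|_S = TS \oplus N_S$ and the fact that $d\mathcal{E}$ is the projection onto the $TS$-summand, so that $(\alpha,\xi)$ is the graph of $(\bpi|_S)^{-1}$ on the horizontal part while the vertical part of $\xi$ is free — exactly the data of a covector on $N_S$. (3) Trace through the anchor map: by definition $\rho_{\mathcal{E}^{!}}(\alpha,\xi) = \xi$, and under the identification with $T^*N_S$ this becomes precisely the anchor $\bpi$ of the cotangent Lie algebroid of the Poisson manifold $N_S$ (with the Poisson structure it inherits as a tubular neighborhood of the leaf), which is what the universal-property diagram in the excerpt forces: the square with $d\mathcal{E}$ on the bottom and $\rho$ on the right commutes, and over $S$ this is the statement $\bpi = (\text{splitting})$. (4) Finally, verify that the Lie brackets agree; this is automatic from the universal property characterizing the pullback algebroid, since $T^*N_S$ with its canonical cotangent-algebroid bracket satisfies the same universal property with respect to $d\mathcal{E}$ and $\bpi$.

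\textbf{Main obstacle.} The routine part is the linear algebra of step (2); the delicate point is making sure the Poisson (hence Lie algebroid) structure one puts on $N_S$ — via a tubular-neighborhood identification with a neighborhood of $S$ in $M$ — is compatible with the algebroid structure on $\mathcal{E}^{!}(T^*S)$ coming purely from $S$. In other words, the genuine content is that, near a symplectic leaf, the cotangent algebroid $T^*M$ restricted to a tubular neighborhood is isomorphic to the pullback along $\mathcal{E}$ of $T^*S$; this is essentially a version of the local normal form / splitting theorem for Poisson manifolds near a leaf, and I would either invoke that or argue directly that the bracket on sections of $T^*N_S$ determined by $\bpi$ and the universal property coincides with the pullback bracket. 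I expect the bracket compatibility to be where one must be careful, whereas the identification of underlying vector bundles and of anchor maps is a direct unwinding of definitions.
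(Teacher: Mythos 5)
Your overall strategy---unwind the fiberwise definition of $\mathcal{E}^{!}(T^*S)$ and then appeal to the universal property for the anchor and the bracket---runs parallel to the paper's proof, which identifies $S$ with the zero section $Z_o:S\to N_S$, splits $Z_o^*(T^*N_S)=T^*S\oplus\nu$, and uses the resulting projection ${\bf p}:Z_o^*(T^*N_S)\to T^*S$ to verify the commutative square characterizing the pullback algebroid. Your additional observation that the leafwise anchor $\bpi|_S:T^*S\to TS$ is an isomorphism is correct and does yield a clean canonical identification in your step (2)---but it identifies $\mathcal{E}^{!}(T^*S)$ with $TN_S$, not with $T^*N_S$.

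That is where the gap sits. Under the projection $(\alpha,\xi)\mapsto\xi$, the horizontal part of $\xi$ determines $\alpha$ via $(\bpi|_S)^{-1}$ and the vertical part of $\xi$ is a \emph{vector} in the fiber of $N_S$, not ``exactly the data of a covector on $N_S$'' as you assert: you land in $\mathcal{E}^*(TS)\oplus\mathcal{E}^*(N_S)$, whereas $T^*N_S\cong\mathcal{E}^*(T^*S)\oplus\mathcal{E}^*(N_S^*)$, so you still need an identification $N_S\cong N_S^*$, which is neither canonical nor, for a general symplectic leaf, even available as a bundle isomorphism (here it is, because the leaves are fibers of a fibration and $N_S$ is trivial---but your argument never says this). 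The paper avoids the detour through $TN_S$ by staying on the cotangent side: the splitting $Z_o^*(T^*N_S)=T^*S\oplus\nu$ directly produces the map ${\bf p}$ into $T^*S$ required by the universal property. A second concrete problem is your step (3): under your own identification $\mathcal{E}^{!}(T^*S)\cong TN_S$ the anchor $\rho_{\mathcal{E}^{!}}(\alpha,\xi)=\xi$ becomes the identity on $TN_S$, hence surjective of full rank, while $\bpi$ on a tubular neighborhood of a regular leaf has rank $r<\dim M$; so the assertion that the anchor ``becomes precisely $\bpi$'' cannot be taken literally and must be reconciled before the argument closes. Your final remark is well placed, though: the compatibility of the bracket induced on $T^*N_S$ by the Poisson structure of the tubular neighborhood with the pullback bracket is the real content, and neither your sketch nor the paper's appeal to the universal property (which produces a morphism, not yet an isomorphism) actually verifies it.
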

\begin{proof}
Observe that the induced Lie algebroid on $S$ is given by the restriction to the Poisson structure at $S$. Note also that $S$ can be identified as the zero section of $N_S$, $Z_o: S\to N_S$. Then there exists a complementary space $\nu$, corresponding to $Z_o$ such that:
\[
Z_o^*(T^*N_S)=T^*S\oplus \nu.
\]
In particular, there exists a projection ${\bf p}: Z_o^*(T^*N_S)\to T^*S$ such that the following diagram commutes:
\[
\xymatrix{T^*\ar [d]_{\rho_{\mathcal{E}^{!}}}N_S \ar[r]^{\bf p}                                    &     T^*S \ar [d]^\bpi \\
            TN_S \ar[r]_{d\mathcal{E}}   &    TS}
\]
This also shows that $T^*N_S$ satisfies the universal property of pullback Lie algebroids. 
\end{proof}

\begin{teo}
The Lie algebroid Thom class of the normal bundle of a symplectic leaf $S$ of a Poisson manifold $(M, \pi)$ is
\[
{\rm Th}_{\pi}(N_S)=\bpi(\eta_S).
\]
\end{teo}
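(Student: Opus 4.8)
The plan is to chase the definitions through the two commutative diagrams already established in the excerpt. Recall that the Lie algebroid Thom class is defined as ${\rm Th}_{\pi}(N_S)=\rho^*_{\mathcal{E}^{!}}\Phi$, where $\Phi$ is the de Rham Thom class of the normal bundle $\mathcal{E}\colon N_S\to S$, and $\rho^*_{\mathcal{E}^{!}}$ is the map on Lie algebroid cohomology induced by the anchor $\rho_{\mathcal{E}^{!}}$ of the pullback Lie algebroid $\mathcal{E}^{!}(T^*S)$. By the preceding Proposition we know that this pullback Lie algebroid is exactly $T^*N_S$, with anchor map $\rho_{\mathcal{E}^{!}}=\mathcal{B}_\pi$. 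So the first step is simply to unwind the definition: ${\rm Th}_{\pi}(N_S)=\mathcal{B}_\pi^*(\Phi)$, where now $\mathcal{B}_\pi^*$ denotes the pullback on cohomology $H^*_{dR}(N_S)\to H^*(T^*N_S)$ induced by the anchor, i.e.\ precisely the Lichnerowicz homomorphism $\mathcal{B}_\pi$ at the level of the Lie algebroid $T^*N_S$.

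Next I would use the identification, established earlier via Poincaré duality and the Thom isomorphism (Section~6 of \cite{BottTu82}), that $j_*(\Phi)=\eta_S$, where $j\colon N_S\to M$ is the inclusion of a tubular neighborhood and $j_*$ is extension by zero. More precisely, under the tubular neighborhood identification, the de Rham Thom class $\Phi\in H^n_{cv}(N_S)$ corresponds to the Poincaré dual class $\eta_S\in H^{n-r}_{dR}(M)$ supported near $S$. The content of the theorem is then that applying the Lichnerowicz homomorphism $\mathcal{B}_\pi\colon H^*_{dR}(M)\to H^*_\pi(M)$ commutes with the algebroid-level construction: $\mathcal{B}_\pi^*(\Phi)$, pushed forward, equals $\mathcal{B}_\pi(\eta_S)$. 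This reduces to the naturality of the anchor-induced map with respect to the inclusion $j$, i.e.\ that $j_*\circ\mathcal{B}_\pi^{N_S}=\mathcal{B}_\pi^M\circ j_*$ on the relevant cohomology groups, which in turn follows because both anchors are restrictions of the single bundle map $\mathcal{B}_\pi\colon T^*M\to TM$ and $j_*$ is just extension by zero of compactly-supported-in-the-fiber forms. Here I would invoke the representation $\bar\eta_S=\bar f_S\,dx^1\wedge dx^2$ from the earlier Proposition so that $\mathcal{B}_\pi(\eta_S)$ is literally the multivector $\bar f_S\,\mathcal{B}_\pi(dx^1)\wedge\mathcal{B}_\pi(dx^2)$, matching the formula $(\mathcal{B}_\pi\circ j_*)(\omega\wedge\Phi)=\bar f_S\,\mathcal{B}_\pi(\omega)\wedge\mathcal{B}_\pi(dx^1)\wedge\mathcal{B}_\pi(dx^2)$ displayed at the end of Section~4 (taking $\omega=1$).

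The main obstacle, and the step deserving the most care, is the compatibility of the "$cv$" (compact support in the vertical direction) machinery with the anchor map $\mathcal{B}_\pi$: the de Rham Thom class lives in $H^n_{cv}(N_S)$, not in ordinary $H^n_{dR}(N_S)$, and one must check that $\rho^*_{\mathcal{E}^{!}}$ respects this support condition so that the Lie algebroid Thom class genuinely lands in the group where $j_*$ is defined and so that $j_*\rho^*_{\mathcal{E}^{!}}\Phi = \mathcal{B}_\pi(j_*\Phi)=\mathcal{B}_\pi(\eta_S)$. Since $\mathcal{B}_\pi$ is fiberwise linear and covers the identity on $M$, pulling back a form with compact vertical support yields a multivector field whose "dual" support is still compact in the vertical direction, so this works, but it should be spelled out using the local trivializations $(U_\alpha,\psi_\alpha)$ and fiber coordinates $s=(s_1,\dots,s_r)$ from the construction of $\mathcal{E}_*$. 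Once that bookkeeping is in place, the equality ${\rm Th}_{\pi}(N_S)=\mathcal{B}_\pi(\eta_S)$ falls out by combining $j_*\Phi=\eta_S$ with the naturality of the Lichnerowicz map, and the proof is complete.
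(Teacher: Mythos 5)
Your proposal is correct and follows essentially the same route as the paper: identify the pullback Lie algebroid with $T^*N_S$ and its anchor with $\bpi$ via the preceding Proposition, use $j_*(\Phi)=\eta_S$, and conclude from the commutation of extension-by-zero with the anchor-induced (Lichnerowicz) map, which is exactly the displayed identity $(\hat{j_{*}}\circ\bpi)(\omega)=\bpi\circ j_{*}(\omega)$ in the paper's proof. Your additional attention to the compact-vertical-support bookkeeping is a welcome refinement of the same argument rather than a different approach.
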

\begin{proof}
Notice that $\rho^*_{\mathcal{E}^{!}}$ is the induced map of the Lie algebroid cohomology of $TN_S$ and $T^*N_S$. Let $\omega\in H_{cv}^*(N_S)$ be an arbitrary form in the vertical cohomology of $N_S$. Denote by $\hat{j_{*}}$ the extension by $0$, $\hat{j_{*}}: \Hpi{*}(N_S)\to \Hpi{*}(M)$. Then, we immediatly have:
\[
(\hat{j_{*}}\circ\bpi)(\omega)=\bpi\circ j_{*}(\omega),
\]
which applied to the Thom class, gives the result.
\end{proof}

%
%
%

%
%

\section{Poisson cohomology classes of \FR structures}

The Poisson cohomology groups $\Hpi{k}(M)$ are $\Cas{M}$-modules. The group $\Hpi{0}(M)$ are the Casimirs of the Poisson structure. By means of the Thom isomorphism we may obtain cohomology classes in $\Hpi{2}(M)$ and $\Hpi{4}(M)$. The classes in each cases are obtained by cup product with the de Rham Thom class of the inclusion of the symplectic leaf in $M$. More precisely,

\begin{itemize}

\item $\Hpi{0}(M)={\rm Cas}_{\pi}(M)$.
\medskip

\item For $k=2$ or $k=4$, we have the sequence of mappings:
\[
\HdR{k-2}(S) \xrightarrow[]{\mathcal{F}^{-1}} \HdR{k}(N_S)\xrightarrow[]{j_{*}} \HdR{k}(M) \xrightarrow[]{\bpi} \Hpi{k}(M).
\]
If $k=2$, then it sends the generator of $\HdR{0}(S)$ to the Poisson class of the bivector
\[
\bpi(\eta_S)=\bar{f}_S\bpi(dx^1)\wedge \bpi(dx^2).
\]
%
For $k=4$, 
\[
\bpi({\rm vol}_S)=\bar{f}_S\bpi(dx^1)\wedge \bpi(dx^2)\wedge \bpi({\rm vol}_S).
\]
\end{itemize}
\medskip

\subsection{Group $\Hpi{3}(M)$} {\em Action of the monodromy of a singular fibration on the Poisson cohomology.}
\label{sec: PclassMon}
\medskip

Let $F:M\to \Sigma$ be a singular fibration and $G$ a group acting fiberwise and freely by diffeomorpisms outside its singular locus. Then it induces a homomorphism
\[
\pi_1(\Sigma\setminus {\rm Sing}(f))\to G,
\]
around a singularity of $F$, that captures the local behavior of the action, and the fibration itself. Then, it encloses the action of $G$ at level of Poisson cohomolgy. More precisely,  let $\Sigma_g$ be a surface of genus $g$, being the general fiber. By the homothopy lifting property, there exists a homomorphism:
\begin{equation}
\label{map: MonRepCohom}
\rho_0:{G}\to {\rm Aut}\left(\HdR{1}(\Sigma_g)\right)
\end{equation}
Assuming that $\rho_0$ preserves the intersection form in $\HdR{1}(M)$, then there exists a homomorphism
\[
\rho_G:\HdR{1}(\Sigma_g)\to \HdR{3}(M)
\]
given by the $G$-action followed by the cup product with the Thom Class of the inclusion $\Sigma_g\hookrightarrow M$. If $M$ is a Poisson manifold, then $\Sigma_g$ can be regarded as a symplectic leaf. We define: 

\begin{dfn}
Let $(M, \pi)$ be a Poisson manifold, whose singular leaves are the same as of a singular fibration. At the symplectic leaf $S$ of a singularity $p$ we define the homomorphism:
\[
{\rm Mon}_{\pi}:=\bpi\circ\rho_G: \HdR{1}(S)\to \Hpi{3}(M).
\]
where $\bpi$ is the Lichnerowicz homomorphism at level $3$.
\end{dfn}
\medskip

The objective of this section is to describe ${\rm Mon}_{\pi}$ for Lefschetz and wrinkled fibrations, as well as for each Lekili's move. 
\newline

For a given surface $\Sigma_g$ of genus $g$, ${\rm Diff}^{+}(\Sigma_g)$ will denote the group of orientation preserving diffeomorphisms of $\Sigma_g$, while ${\rm Diff}_0^{+}(\Sigma_g)$ is the subgroup of ${\rm Diff}^{+}(\Sigma_g)$ consisting of the diffeomorphisms isotopic to the identity. The {\em mapping class group} of $\Sigma_g$ is the quotient group 
\[
\mathcal{M}(\Sigma_g):={\rm Diff}^{+}(\Sigma_g)/ {\rm Diff}_0^{+}(\Sigma_g).
\]
Its group structure is given by concatenation of paths. For brevity, we will offten write $\mathcal{M}_g$.
\newline

Let $\gamma$ be a simple closed curve in $\Sigma_g$. A curve $\gamma$ is {\em separating} if $\Sigma_g\setminus \gamma$ is disconnected, otherwise it is {\em nonseparating}. A nonseparating curve is cohomologically non-trivial. For any tubular neighborhood of $\gamma$, there exists a diffeomorphism $\psi: {\rm Tub}(\gamma)\to S^1\times [0, 1]$. Given a map $d: S^1\times [0, 1]\to S^{1}\times[0, 1], d(\theta, t)=(\theta+2\pi t, t)$, a {\em Dehn twist} $D_{\gamma}:\Sigma_g\to \Sigma_g$ along $\gamma$ is a homeomorphism defined as 
\[
D_{\gamma}:=\begin{cases}
\psi^{-1}\circ d\circ \psi & \mbox{in $ {\rm Tub}(\gamma)$,}\\
{\rm Id} & \mbox{in $\Sigma_g\setminus  {\rm Tub}(\gamma)$}.
\end{cases}
\]
\medskip

The well known Dehn-Lickorish theorem states that the mapping class group ${\rm Mod}(\Sigma_g)$ is generated by Dehn twists around $3g-1$ nonseparating simple curves. S. Humphries found $2g+1$ generators \cite{Humphries79}. 
\newline

 Let $f:M\to \Sigma$ be a Lefschetz fibration with genus $g$ surfaces as fibres. Then its singular locus is a finite set:
\[
{\rm Sing}(f)=\{b_1, \dots, b_r\}.
\]

For a fixed point $x_o\in {\rm Int}(\Sigma)\setminus {\rm Sing}(f)$, attached to the general fibre $\Sigma_g$, consider a loop $\gamma:[0, 1]\to Y\setminus {\rm Sing}(f)$ at $x_o$. Idenfity $f^{-1}(x_o)$ with $\Sigma_g$ by an orientation preserving diffeomorphism $\Phi:\Sigma_g\to f^{-1}(x_o)$. There exists a diffeomorphism 
\[
\varphi: [0, 1]\times \Sigma_g\to X\setminus f^{-1}({\rm Sing}(f))
\] 
that preserves orientation with $\varphi(0, p)=\Sigma_p$, and $f(\varphi(t, p))=\gamma(t).$
\medskip

\begin{dfn}
The monodromy of $\gamma$ associated with $\Phi$ is the isotopy class of $\Phi^{-1}\circ \varphi(\cdot, 1): \Sigma_g\to \Sigma_g$. The group homomorphism
\[
\pi_1(\Sigma\setminus {\rm Sing}(f))\to \mathcal{M}_g, \quad \gamma\to [\Phi^{-1}\circ\varphi(\cdot, 1)]
\]
is called {\em the monodromy representation}. It is well defined up to conjugacy by $\mathcal{M}_g$.
\end{dfn}

Y. Matsumoto showed that any two Lefschetz fibrations are isomorphic if and only if their monodromy representations are equivalent \cite{Mats96}. 
\newline

Let $f:M\to \Sigma$ a BLF, suppose that it has exactly one singular circle of indefinite folds and one Lefschetz singularity $p_s$. Let $\Gamma$ be the singular circle and  $p_f\in \Gamma$. Let $c$ be a path that connects a fixed regular point $p$ to $p_f$; and $\gamma$ a disjoint path to $c$ that connects $p$ to $p_s$. Then the tuple $(c; \gamma)$ determines completely the BLF if and only if the Dehn twist along $\gamma$, $D_{\gamma}$, belongs to the subgroup of $\mathcal{M}_g$
\[
\left\{D\in \mathcal{M}_g|\ D:\Sigma_g\to \Sigma_g, D(c)=c \right\}.
\]

More generally,  given a BLF, there exists a tuple $(c; \gamma_1, \dots, \gamma_{l})$ of simple closed non-separating curves on $\Sigma_g$ satisfying that the product of Dehn twists $D_{\gamma_{1}}\cdots D_{\gamma_{l}}$ lies into the subgroup $\mathcal{M}_g[c]$ of $\mathcal{M}_g$, formed by diffeomorphisms that fix $c$. Such a tuple is called  a {\em Hurwitz system}. Two BLF's are isomorphic if and only if their Hurwitz systems are equivalent \cite{BayHay15}. Even more, there exists a map 
\begin{equation*}
\Phi_{c}:\mathcal{M}_g\to \mathcal{M}_{g-1}
\]
such that $D_{\gamma_{1}}\cdots \gamma_{l}\in \ker(\Phi_c)$, and that factors by:
\[
\mathcal{M}_g\to \mathcal{M}_g(\Sigma_g\setminus N_c), \mbox{\ and } \mathcal{M}_g(\Sigma_g\setminus N_c)\to \mathcal{M}_{g-1},
\]
where $N_{c}$ is a tubular neighborhood of $c$. Recall that we are assuming that there is only one singular circle, then $N_c$ contains no other singular circles. See \cite{Baykur09}. Note that the mapping and its factors describe a surgery process that is performed when crossing a singular circle where the genus of the general fiber drops down by 1. The kernel of $\Phi_c$ is generated by lifts of point pushing maps and $D_c$ (see Lemma 3.1 of \cite{BayHay15}). Then the equation:
\[
D_{1}\cdots D_l=\pm c
\]
determines the monodromy representation. One says that the monodromy representation is contained in a subgroup $H< \mathcal{M}_g$ if all $D_1, \dots, D_l$ belong to $H$, up to conjugacy. 
\newline

\begin{lemma}
The subgroup  $\mathcal{M}_g[c]$ of $\mathcal{M}_g$, formed by diffeomorphisms that fix $c$, acts freely on the closed surface $\Sigma_{g-1}$, which is obtained from the general fiber $\Sigma_g$ by removing a handle along $c$, and gluing in two disks. Then we have a homomorphism:
\[
\rho_0:\mathcal{M}_g[c]\to {\rm Aut}\left(\HdR{1}(\Sigma_g)/\langle [c] \rangle \right)\simeq {\rm Aut}\left(\HdR{1}(\Sigma_{g-1})\right).
\]
It is the induced map from the monodromy representation:
\[
\pi_1(\Sigma\setminus \left({\rm Sing}(f)\cup N\right))\to \mathcal{M}_{g-1}
\]
\end{lemma}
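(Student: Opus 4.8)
The plan is to build $\rho_0$ as a composition of three maps and then to match that composition with the geometric monodromy of $F$. First I would make the ``cut-and-cap'' surgery precise at the level of diffeomorphisms. Fix a simple closed curve $c\subset\Sigma_g$ representing the vanishing cycle of $\Gamma$ and an annular neighbourhood $N_c\cong S^1\times[-1,1]$; every class in $\mathcal{M}_g[c]$ has a representative $\varphi$ with $\varphi(N_c)=N_c$. Deleting $\mathrm{int}\,N_c$ from $\Sigma_g$ and capping each of the two new boundary circles with a disc produces $\Sigma_{g-1}$, and since $\mathrm{Diff}^+(S^1)$ is connected (Alexander's trick) the restriction $\varphi|_{\Sigma_g\setminus\mathrm{int}\,N_c}$ extends over the two discs to an orientation-preserving $\bar\varphi\in\mathrm{Diff}^+(\Sigma_{g-1})$, letting $\bar\varphi$ permute the discs when $\varphi$ swaps the boundary circles. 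An isotopy through diffeomorphisms preserving $N_c$ restricts to $\Sigma_g\setminus\mathrm{int}\,N_c$ and extends over the discs, so $[\varphi]\mapsto[\bar\varphi]$ is a well-defined homomorphism $\mathcal{M}_g[c]\to\mathcal{M}_{g-1}$, namely the restriction to $\mathcal{M}_g[c]$ of the map $\Phi_c$ recalled above; this realizes the fibrewise action of $\mathcal{M}_g[c]$ on the genus $g-1$ fibre asserted in the statement, in the sense of the set-up recalled at the start of this section.

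Next I would post-compose with the standard symplectic representation $\mathcal{M}_{g-1}\to\mathrm{Aut}(\HdR{1}(\Sigma_{g-1}))$ to obtain $\rho_0$, and identify the two target groups. Choosing a symplectic basis $a_1,b_1,\dots,a_g,b_g$ of $\HdR{1}(\Sigma_g)$ in which $a_g$ is Poincaré dual to $c$, a Mayer--Vietoris computation for $\Sigma_{g-1}=(\Sigma_g\setminus\mathrm{int}\,N_c)\cup(D^2\sqcup D^2)$ shows that cutting along $c$ annihilates $a_g$ and $b_g$ and identifies $\HdR{1}(\Sigma_{g-1})$ with the subquotient $[c]^{\perp}/\langle[c]\rangle$, the symplectic orthogonal complement of $[c]$ modulo $[c]$; this is what the notation $\HdR{1}(\Sigma_g)/\langle[c]\rangle$ is to be read as. Any $\varphi\in\mathcal{M}_g[c]$ sends $[c]$ to $\pm[c]$ and preserves the intersection form (it is orientation-preserving), hence preserves $[c]^{\perp}$ and $\langle[c]\rangle$ and descends to $[c]^{\perp}/\langle[c]\rangle$; since $\varphi$ and $\bar\varphi$ agree on $\Sigma_g\setminus\mathrm{int}\,N_c$, the Mayer--Vietoris identification is equivariant and $\rho_0(\varphi)$ is exactly the automorphism induced by $\varphi$ on $[c]^{\perp}/\langle[c]\rangle$, which gives the displayed isomorphism of targets.

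Finally I would match $\rho_0$ with the homological monodromy over $\Sigma\setminus(\mathrm{Sing}(f)\cup N)$, where $N$ is a neighbourhood in the base of the fold image $F(\Gamma)$ together with a collar along which the fibrewise surgery is performed, chosen so that $F$ restricts there to a genuine genus $g-1$ surface bundle. The inclusion induces $\pi_1(\Sigma\setminus(\mathrm{Sing}(f)\cup N))\to\pi_1(\Sigma\setminus\mathrm{Sing}(f))$ whose image lies in the subgroup whose $\mathcal{M}_g$-monodromy fixes $c$: by the local fold normal form $(t,x,y,z)\mapsto(t,-x^2+y^2+z^2)$, crossing $F(\Gamma)$ is exactly surgery of the fibre along the vanishing cycle $c$, so a loop avoiding $F(\Gamma)$ transports $c$ to itself. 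Parallel transport of the genus $g-1$ fibre along such a loop is obtained by applying the fibrewise surgery to parallel transport of the genus $g$ fibre, i.e.\ it equals $\Phi_c$ of the $\mathcal{M}_g$-monodromy, so on $\HdR{1}$ it equals $\rho_0$ and factors through $\mathcal{M}_{g-1}$. The main obstacle is this last step: one must justify that the abstract cut-and-cap map $\Phi_c$ genuinely computes parallel transport across a fold (invoking the surgery description of simplified BLF's recalled above) and check that the ambiguity in $\bar\varphi$ modulo $\ker\Phi_c$ --- generated by $D_c$ and point-pushing maps --- does not affect $\rho_0$, which holds because $D_c$ acts trivially on $[c]^{\perp}/\langle[c]\rangle$ and point-pushing maps act trivially on $\HdR{1}(\Sigma_{g-1})$. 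The remaining ingredients --- connectedness of $\mathrm{Diff}^+(S^1)$, the Mayer--Vietoris computation, and functoriality of the symplectic representation --- are routine.
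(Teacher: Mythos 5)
Your proposal is correct and follows essentially the same route as the paper: cut $\Sigma_g$ along $c$, cap the two boundary circles with discs, and identify the first cohomology of the resulting genus $g-1$ surface; the paper does this by describing the inverse handle-attachment and citing Farb--Margalit, which is the same computation as your Mayer--Vietoris argument. The one substantive point where you go beyond (and in effect repair) the paper's argument is the identification of the target: the literal quotient $\HdR{1}(\Sigma_g)/\langle [c]\rangle$ has dimension $2g-1$, not $2g-2$, so your reading of it as the symplectic subquotient $[c]^{\perp}/\langle [c]\rangle$ is the correct one and is needed for the displayed isomorphism with $\HdR{1}(\Sigma_{g-1})$ to hold. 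Your check that the ambiguity in the cut-and-cap map modulo $\ker\Phi_c$ (the twist $D_c$ and point-pushing maps) acts trivially on this subquotient, so that $\rho_0$ is well defined and compatible with the $\mathcal{M}_{g-1}$-valued monodromy, is likewise a necessary step that the paper leaves implicit.
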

\begin{proof}
The group $\HdR{1}(\Sigma_{g-1})$ has $2g-2$ generators, given by pairs of transversal curves $(a_i, b_i)$, $i=1, \dots, g-1$. From $\Sigma_{g-1}$ we remove two disks with centers $p_1, p_2$. Furthermore, we have a natural identification $\Sigma_{g-1}\setminus \{p_1, p_2\}$ with the quotient $\Sigma_g/c$. Then we glue a handle, generated by $c$. The resulting surface adds two additional transversal curves $(a_c, b_{1, 2})$. The curve $a_c$ identified with $c$; while $b_{1, 2}$ with the  curve that connects $p_1, p_2$ along the attached handle. This shows that $\HdR{1}(\Sigma_{g-1})\simeq \HdR{1}(\Sigma_{g}/\langle [c] \rangle$. The result follows. See Section also 3.18 in \cite{FarbMar12}.
\end{proof}
\medskip
%

\begin{prop}
Let $f: M\to \Sigma$ be a BLF, and denote by $\FRb{M}$ its associated \FR structure with bivector $\pi$. Then we have:
\begin{enumerate}
\item[i)] In a neighborhood of a Lefschetz singularity, it is given by the monodromy representation, ${\rm Mon}_{\pi}=\bpi\circ \rho_{\mathcal{M}_g}$, and
\medskip

\item[ii)] in a tubular neighborhood of a singular circle, it is given by ${\rm Mon}_{\pi}=\bpi\circ \rho_{\mathcal{M}_g[c]}$.
\end{enumerate}
\end{prop}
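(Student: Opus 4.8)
The plan is to deduce both claims directly from the definition of ${\rm Mon}_{\pi}$, the actual work being to identify, in each of the two local pictures, the symplectic leaf $S$ and the group $G$ of the general setup preceding that definition. The common starting point is the construction of Section~3: the \FR structure $\pi$ associated to the BLF $f$ has symplectic foliation given by the fibres of $f$ (see Proposition~2.4 of \cite{GSSV15} and the global patching in \eqref{biv: global}), so every regular fibre is a symplectic leaf and the singular leaves are the critical fibres over ${\rm Sing}(f)$. Hence the normal bundle, the Thom class and the Lichnerowicz maps of Section~4 apply to these leaves verbatim; in particular $\bpi$ in degree $3$ is defined on the classes produced by $\rho_G$. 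Since the de Rham Thom class of a leaf depends only on its isotopy class in $M$, the choice of representative fibre below is immaterial.

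For (i) I would fix a Lefschetz singularity $p_s$, a small disk $D\subset\Sigma$ about $f(p_s)$ containing no other critical value, and take $S=f^{-1}(x_o)\cong\Sigma_g$ for some $x_o\in D\setminus{\rm Sing}(f)$; all such fibres are mutually isotopic in $M$. The group ``acting fibrewise outside the singular locus'' is the image of the monodromy representation $\pi_1(\Sigma\setminus{\rm Sing}(f))\to\mathcal{M}_g$ defined above; composing with the standard action of $\mathcal{M}_g$ on $\HdR{1}(\Sigma_g)$, which preserves the algebraic intersection form, gives the homomorphism $\rho_0$ of \eqref{map: MonRepCohom} with $G=\mathcal{M}_g$. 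Then $\rho_G$ is by construction the map $\rho_{\mathcal{M}_g}\colon\HdR{1}(\Sigma_g)\to\HdR{3}(M)$, the action followed by cup product with the Thom class of $\Sigma_g\hookrightarrow M$, and applying $\bpi$ in degree $3$ yields ${\rm Mon}_{\pi}=\bpi\circ\rho_{\mathcal{M}_g}$, which is the assertion.

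For (ii) I would work inside a tubular neighbourhood $U_{\Gamma}$ of the singular circle $\Gamma$. Across $\Gamma$ the regular fibre has genus $g$ on one side and $g-1$ on the other, and crossing $\Gamma$ is realised by the surgery encoded in the map $\Phi_c\colon\mathcal{M}_g\to\mathcal{M}_{g-1}$ and its factorisation through $\mathcal{M}_g(\Sigma_g\setminus N_c)$ recalled above: the monodromy of loops in $U_{\Gamma}\setminus\Gamma$ must fix the vanishing cycle $c$, so it lies in $\mathcal{M}_g[c]$, whence $G=\mathcal{M}_g[c]$ and the relevant leaf is the surgered surface $S\cong\Sigma_{g-1}$. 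By the Lemma, $\mathcal{M}_g[c]$ acts freely on $\Sigma_{g-1}$ with induced homomorphism $\rho_0\colon\mathcal{M}_g[c]\to{\rm Aut}(\HdR{1}(\Sigma_g)/\langle[c]\rangle)\simeq{\rm Aut}(\HdR{1}(\Sigma_{g-1}))$. Running the computation of (i) with $\rho_G=\rho_{\mathcal{M}_g[c]}$ then gives ${\rm Mon}_{\pi}=\bpi\circ\rho_{\mathcal{M}_g[c]}$.

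The main obstacle is precisely the identification in (ii): matching the abstract ``$G$ acting freely fibrewise'' of the general setup with the concrete subgroup $\mathcal{M}_g[c]<\mathcal{M}_g$. This requires combining the Hurwitz-system description of a BLF — the relation $D_1\cdots D_l=\pm c$ and the structure of $\ker(\Phi_c)$ following \cite{BayHay15, Baykur09} — with the surgical picture of crossing a singular circle, together with the Lemma for the freeness of the action and the identification $\HdR{1}(\Sigma_{g-1})\simeq\HdR{1}(\Sigma_g)/\langle[c]\rangle$. The remaining ingredients — independence of the chosen regular fibre, isotopy-invariance of the Thom class, and preservation of the intersection form by the $\mathcal{M}_g$-action on $\HdR{1}(\Sigma_g)$ — are standard and only need to be recorded.
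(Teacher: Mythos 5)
Your argument is correct and coincides with what the paper intends: the proposition is stated there without an explicit proof, being treated as an immediate unwinding of the definition ${\rm Mon}_{\pi}=\bpi\circ\rho_G$ once $G$ is identified with the image of the monodromy representation in $\mathcal{M}_g$ near a Lefschetz point and with $\mathcal{M}_g[c]$ near the singular circle via the preceding Lemma. Your write-up supplies exactly those identifications (plus the routine checks of leaf-independence and preservation of the intersection form), so it matches the paper's approach and, if anything, is more complete than what is printed.
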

\medskip

Observe that around a broken singularity, if the general fiber is a surface of genus $g$, the monodromy is contained in the mapping class group of $\Sigma_{g-1}$. 



%
%
%
%

\begin{teo}
Let $M$ be a \FR manifold with generic singularities. Then the homomorphism ${\rm Mon}_{\pi}$ at a symplectic leaf is determined by the action of Dehn twists on $\HdR{1}(\Sigma_g)$ or Dehn twists on $\HdR{1}(\Sigma_{g-1})$. It gives non-trivial classes in $\Hpi{3}(M)$. 
\end{teo}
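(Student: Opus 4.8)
The strategy is to assemble the theorem from the structural pieces already in place: the local Poisson bivector of a \FR\ structure at a generic singularity (Theorem following Proposition~\ref{prop: FRlocal}), the Thom/Poincar\'e-dual description of a symplectic leaf (the Proposition computing $\bar\eta_S$ and the Theorem giving ${\rm Th}_\pi(N_S)=\bpi(\eta_S)$), and the monodromy data encoded by $\rho_G$ and $\rho_0$. Concretely, I would first recall that for a generic (wrinkled) fibration every singularity is a fold or a cusp, and that in a punctured neighbourhood of such a singularity the general fiber is a genus $g$ surface $\Sigma_g$ (respectively $\Sigma_{g-1}$ once one has crossed an indefinite fold, by the surjection $\mathcal M_g\to\mathcal M_{g-1}$ and the Lemma above). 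Thus the relevant symplectic leaf $S$ is a copy of $\Sigma_g$ or $\Sigma_{g-1}$, and ${\rm Mon}_\pi=\bpi\circ\rho_G$ is, by definition, the composite of the $G$-action on $\HdR{1}(S)$ with cup product by $\Phi$ (equivalently, multiplication by $\bar\eta_S$) followed by the Lichnerowicz map.

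\textbf{Main steps.} (1) Identify $G$: for a Lefschetz singularity $G=\mathcal M_g$ acting through its standard representation on $\HdR{1}(\Sigma_g)$, and the Dehn-Lickorish/Humphries theorem expresses every element as a product of Dehn twists $D_{\gamma_i}$; for a broken singularity $G=\mathcal M_g[c]$ acting on $\HdR{1}(\Sigma_g)/\langle[c]\rangle\simeq\HdR{1}(\Sigma_{g-1})$, again by Dehn twists (this is exactly the content of the Proposition preceding the theorem). (2) Compute the action explicitly on a symplectic basis $(a_i,b_i)$: the Picard–Lefschetz formula gives, for a nonseparating curve $\gamma$, $D_\gamma^*(\alpha)=\alpha+\langle\alpha,[\gamma]\rangle\,[\gamma]$ where $\langle\,,\,\rangle$ is the intersection form, which $\rho_0$ preserves by hypothesis — so $\rho_0$ lands in the integral symplectic group and is non-trivial whenever some $[\gamma_i]\neq 0$, which holds since the $\gamma_i$ are nonseparating, hence cohomologically nontrivial. (3) Push this through $\rho_G$ and $\bpi$: for a generator $a\in\HdR1(S)$ with nonzero image, $\rho_G(a)=a\wedge\Phi$ is the de Rham class represented (after choosing the metric of the Proposition) by $\bar f_S\,\iota_S^*a\wedge dx^1\wedge dx^2$, and applying the Lichnerowicz homomorphism yields
\[
{\rm Mon}_\pi(a)=\bar f_S\,\bpi(\iota_S^*a)\wedge\bpi(dx^1)\wedge\bpi(dx^2)\in\Hpi 3(M),
\]
in the local model $\bpi(dx^i)=\sum_j\pi^{ij}\partial/\partial x^j$ with $\pi^{ij}$ given by \eqref{eqn: GenPoissBiv}. (4) Non-triviality: since $\Phi=\bpi(\eta_S)={\rm Th}_\pi(N_S)$ is by construction the algebroid Thom class and $j_*$ is injective on the image of $\wedge\Phi$ (Thom isomorphism), and since the de Rham class $a\wedge\eta_S\in\HdR3(M)$ is nonzero when $[a]\ne 0$ (it pairs nontrivially against a dual 1-cycle on $S$), one gets a nonzero de Rham class; I then argue its image under $\bpi$ survives in $\Hpi3(M)$ because it is detected by integration over the compact leaf $S$ — equivalently, $\dpi$ of any putative $3$-vector primitive would have to come from $d$ of a $2$-form by $\bpi(d\eta)=-[\pi,\bpi\eta]_N$, contradicting the Thom–Poincar\'e nondegeneracy.

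\textbf{Anticipated obstacle.} The delicate point is step (4): the Lichnerowicz homomorphism $\bpi:\HdR*(M)\to\Hpi*(M)$ is not injective in general, so "nonzero in de Rham" does not automatically give "nonzero in Poisson." I would handle this by exhibiting a concrete pairing or Casimir-module argument detecting the class — e.g. multiply by the Casimir $F$ or a function of it to land in $\Hpi4(M)$, where $\bpi({\rm vol}_S)=\bar f_S\,\bpi(dx^1)\wedge\bpi(dx^2)\wedge\bpi({\rm vol}_S)$ is visibly nonzero as computed in the previous subsection, and deduce nontriviality of the degree-3 class from the $\Cas{M}$-module structure of $\Hpi*(M)$; alternatively, restrict the whole picture to the open set $W\cup U_C$ (resp. $W\cup U_\Gamma$) where the bivector is $(\text{unit})\cdot\pi_F$ and $\pi_F$ is regular of rank $2$, so that on the symplectic leaf the Lichnerowicz map is an honest isomorphism onto leafwise cohomology and the class $\bpi(\iota_S^*a)$ is just the image of the nonzero class $a\in\HdR1(\Sigma_g)$. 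Tracking how the local classes glue via the cut-off functions $\sigma,\lambda,\tau$ of \eqref{biv: global} without killing the class is the last bookkeeping step. With either of these refinements the statement follows.
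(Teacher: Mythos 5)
The paper contains no separate proof of this theorem: it is stated as a summary of the preceding Lemma and Proposition, so your steps (1)--(2) --- identifying the monodromy group as $\mathcal{M}_g$ at an isolated singularity and $\mathcal{M}_g[c]$ acting on $\HdR{1}(\Sigma_g)/\langle[c]\rangle\simeq\HdR{1}(\Sigma_{g-1})$ at a fold circle, then invoking Dehn--Lickorish and the Picard--Lefschetz formula to reduce to Dehn twists --- reproduce exactly the implicit argument, and that half of your plan is fine. The genuine problem is step (4), the non-triviality in $\Hpi{3}(M)$, which is the only part of the statement with real content, and your argument for it does not work. First, multiplication by a Casimir is degree-preserving on multivector fields (a function times a $3$-vector field is still a $3$-vector field), so ``multiply by the Casimir $F$ to land in $\Hpi{4}(M)$'' is not an available move. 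Second, and more seriously, for a \FR structure on a $4$-manifold the anchor $\bpi$ has rank at most $2$ everywhere; hence for any covectors $\alpha_1,\alpha_2,\alpha_3$ the vectors $\bpi(\alpha_1),\bpi(\alpha_2),\bpi(\alpha_3)$ lie in a distribution of rank $\le 2$ and any alternating $3$-form evaluated on them vanishes. Since $\bpi$ preserves wedge products, the degree-$3$ Lichnerowicz map is identically zero already at the level of cochains. In particular the explicit representative you write down, $\bar f_S\,\bpi(\iota_S^*a)\wedge\bpi(dx^1)\wedge\bpi(dx^2)$, vanishes pointwise, and your claim that on the regular locus ``the Lichnerowicz map is an honest isomorphism onto leafwise cohomology'' is false in degree $3$ for the same reason. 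The fallback of detecting the class ``by integration over the compact leaf'' is not a defined pairing on $\Hpi{3}(M)$, and $j_*$ (extension by zero) is not injective in general.

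So the non-triviality assertion is not established by your proposal --- nor, it must be said, by the paper, whose final section only claims it ``provided that $M$ has non-vanishing third de Rham cohomology group,'' a hypothesis absent from the theorem and one that in any case cannot rescue a map that is zero on representatives. To make the statement work one would need to produce degree-$3$ Poisson classes by a mechanism other than applying $\wedge^3\bpi$ to de Rham $3$-classes: for instance via the Mayer--Vietoris decomposition $M=W\cup U_C\cup U_{\Gamma}$ and the $\Cas{M}$-module structure, or by coupling tangential $1$-classes of the leaf with directions normal to the symplectic foliation. None of this appears in your plan, so as written the proof has a genuine gap at its crucial step.
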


\subsection{Lekili's moves and their monodromies}

Summarizing, a wrinkling fibration may have cusps, indefinite folds or broken singularities as critical points. The effect of the monodromy in the Poisson manifold around indefinite folds is determined by a homomorphism $\Phi_{c}:\mathcal{M}_g\to \mathcal{M}_{g-1}$ which induces a homomorphism $\tilde{\Phi}_{c}: \HdR{1}(\Sigma_{g})\to \HdR{1}(\Sigma_{g-1})$, for a nonseparating curve $c$, and then by composing with the Lichnerowicz homorphism. At broken singularities, the effect is given by Dehn twists along vanishing cycles. While in the case of a cusp, when one approaches to a cusp, the general fiber increases its genus by 1. In the reverse process, the Lefschetz singularity is replaced by three cusps.

\subsubsection*{Birth}
The only values of the parameter $s$ at which a birth mapping produces singularities are when $s=0$ or $s>0$. In the first case, there is only one singularity at origin, which is a cusp. In fact, as was described by Lekili, this move substitutes this cusp singularity by a Lefschetz singularity, said otherwise, one obtains a Morse function. 

For the second case, the singular locus is the circle $\{x^2+t^2=s, y=z=0\}$, obtained by gluing two cusps. The critical value set is the union of the lines $\{x=1, y=z=0\}$ and $\{x=-1, y=z=0\}$. Let $L$ be a segment joinging these lines. Then, along $L$ the fiber degenerates by increasing its genus by 1. Thus, when a birth of a cusp singularity occurs, there exists a homomorphism
\begin{equation*}
\hat{\Phi}_{c}:\mathcal{M}_{g}\to \mathcal{M}_{g+1}
\]
describing the attaching of a handle. Analogously as before, one has a homomorphism $\HdR{1}(\Sigma_{g})\to \HdR{1}(\Sigma_{g+1})$, by the homotopy invariance property of the de Rham cohomology. It determines the homomorphism ${\rm Mon}_{\pi}$ for this case. 

\subsubsection*{Merging.}
This move describes the gluing of two singular circles. Recall that the monodromy around a singular circle is given by a homomorphism $\Phi_c$. In fact, if $s>0$ we have a wrinkled map whose singular locus is the gluing of two cusps. For $s>0$ we have a cusp singularity. For $s=0$ is similar as in the previous one.

	
\subsubsection*{Flipping.}
For $s<0$ the singular locus is a simple curve (with no cusps). Along the singular locus the genus of the general fiber increases by 1. For $s=0$ one have a higher order singularity, with a similar behavior. For $s>0$, a flipping behavior happens. Let $a$ be a separating curve at the general fiber. Then, the monodromy is given by a Dehn twist around a nonseparating curve. The resulting map ${\Phi}_{a}:\mathcal{M}_{g}\to \mathcal{M}_{g}$ preserves the genus of the fiber, but it factors via removing tubular neighborhood of a nonseparating curve $a$ and a pushing map along a curve $b$, regarding $a$ and $b$ as generators in cohomology.

\subsubsection*{Wrinkling. }
In this case, for each parameter $s$, the singular locus is given by $\{(t, x, y, z)|\ x^2+t^2+st=0, y=z=0\}$. It is a curve with 3 cusps. 
\newline

We also refer the reader to \cite{Hayano14}, where the K. Hayano described the change of monodromy for wrinkled fibrations, in terms of vanishing cycles. 
\medskip

\subsection{The modular class}

The {\em modular class} $[Z_{\pi}]$ of an oriented Poisson manifold $\FRb{M}$ is the Poisson cohomology class in $\Hpi{1}(M)$ represented by the (global) vector field $Z_{\pi}$ defined by:
\[
Z_{\pi}(h):={\rm div}^{\mu}(\bpi(dh))
\]
where ${\rm div}^{\mu}$ is the divergence with respect to $\mu$. The vector field $Z_{\pi}(f)$ is called the {\em modular vector field}. If $Z_{\pi}(f)$ vanishes everywhere, then $\pi$ is called {\em unimodular.} \FR structures are unimodular \cite{Prz01}.


\begin{lemma}
Consider a Poisson bivector on $\mathbb{R}^4$ given by:
\[
\pi(p)=k\cdot \left[A_1 \frac{\partial }{\partial y}\wedge \frac{\partial }{\partial z}+A_2 \frac{\partial }{\partial z}\wedge \frac{\partial }{\partial x}+A_3 \frac{\partial }{\partial x}\wedge \frac{\partial }{\partial y}\right].
\]
with smooth functions $k, A_i: \mathbb{R}^4\to \mathbb{R}$, $i=1, \dots, 4$, being $k$ non-vanishing. Then the modular vector field with respect to the volume form is given by:
\begin{equation}
\label{exp: Modvec}
Z_{\pi}=\left\langle{\rm rot}\left[A_1, A_2, A_3\right], \left(\frac{\partial}{\partial x}, \frac{\partial}{\partial y}, \frac{\partial}{\partial z}\right)\right\rangle-\bpi(d\log(|k|)).
\end{equation}
Here ${\rm rot}$ and $\langle, \rangle$ denote the rotational operator and  the euclidean inner product in $\mathbb{R}^3$, respectively.
\end{lemma}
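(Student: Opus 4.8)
The strategy is a direct computation of the divergence operator against the chosen volume form, using the two standard facts that (i) for a Poisson bivector $\pi$ the modular vector field $Z_\pi$ is a derivation (so it suffices to evaluate it on the coordinate functions $x,y,z,t$), and (ii) the divergence satisfies the product rule ${\rm div}^{\mu}(fX)=f\,{\rm div}^{\mu}(X)+X(f)$. Here the volume form is taken in the normalized shape $\mu=\tfrac{1}{k}\,dt\wedge dx\wedge dy\wedge dz$, as fixed in the convention preceding Proposition \ref{prop: FRlocal}, so that with respect to the bare Euclidean form $\mu_0=dt\wedge dx\wedge dy\wedge dz$ one has $\mu=\tfrac1k\mu_0$ and hence ${\rm div}^{\mu}(X)={\rm div}^{\mu_0}(X)-X(\log|k|)$ for any vector field $X$. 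This identity is exactly what will peel off the $\bpi(d\log|k|)$ term at the end.

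\textbf{Main steps.} First I would write out $\bpi(dx),\bpi(dy),\bpi(dz),\bpi(dt)$ explicitly from the given bivector: since $\pi = k\,(A_1\,\partial_y\wedge\partial_z + A_2\,\partial_z\wedge\partial_x + A_3\,\partial_x\wedge\partial_y)$ has no $\partial_t$ component, one gets $\bpi(dt)=0$, $\bpi(dx)=k(A_3\,\partial_y - A_2\,\partial_z)$, $\bpi(dy)=k(-A_3\,\partial_x + A_1\,\partial_z)$, $\bpi(dz)=k(A_2\,\partial_x - A_1\,\partial_y)$ (up to an overall sign fixed by the sign convention in \eqref{eqn: localPoisson2}). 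Next, compute ${\rm div}^{\mu_0}$ of each of these. For a vector field $X=k(B_1\partial_x+B_2\partial_y+B_3\partial_z)$ with no $t$-component, ${\rm div}^{\mu_0}(X)=\partial_x(kB_1)+\partial_y(kB_2)+\partial_z(kB_3)$; feeding in the three displayed fields and expanding, the terms involving derivatives of $k$ reassemble into $-\bpi(d\log|k|)$ applied to the relevant coordinate (again via the product rule), while the remaining terms involving derivatives of the $A_i$ reassemble precisely into the components of ${\rm rot}(A_1,A_2,A_3)=(\partial_y A_3-\partial_z A_2,\ \partial_z A_1-\partial_x A_3,\ \partial_x A_2-\partial_y A_1)$. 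Collecting these computations for $h=x,y,z$ (the $h=t$ case giving $Z_\pi(t)=0$, consistent with the RHS having no $\partial_t$-component) yields exactly \eqref{exp: Modvec}.

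\textbf{The main obstacle.} The only real subtlety is bookkeeping of signs and of the cyclic pattern $(1,2,3)\leftrightarrow(x,y,z)$: one must be careful that the sign convention in the definition $\bpi(\alpha)(\cdot)=\pi(\cdot,\alpha)$ together with the factor $\tfrac12$ and the antisymmetrization in \eqref{eqn: localPoisson}–\eqref{eqn: localPoisson2} produces the rotational with the stated orientation rather than its negative, and that the $d\log|k|$ term enters with the minus sign shown. A clean way to sidestep most of this is to first prove the lemma in the unimodular-friendly case $k\equiv 1$ by the direct expansion above, and then recover the general $k$ by the rescaling identity ${\rm div}^{(\mu_0/k)}(X)={\rm div}^{\mu_0}(X)-X(\log|k|)$ applied to $X=\bpi(dh)$, noting $\bpi$ itself is linear in $k$ so that $\bpi_{\pi}(dh)=k\,\bpi_{\pi_1}(dh)$ where $\pi_1$ is the bivector with $k=1$; combining the two contributions and using the product rule once more gives the displayed formula. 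No deep input is needed beyond these manipulations, so the proof is essentially a careful unwinding of definitions.
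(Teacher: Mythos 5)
Your overall strategy is sound and genuinely different from the paper's: you compute $Z_\pi(h)={\rm div}^{\mu}(\bpi(dh))$ directly on the coordinate functions and sum up ordinary divergences, whereas the paper evaluates the closed-form odd-variable expression $Z_\pi=\sum_i\partial^2\pi/\partial x_i\partial\zeta_i$ from Dufour--Zung for $k\equiv1$ and then cites their rescaling proposition. Your $k\equiv1$ computation is correct and lands on the rotational exactly as the paper's does, modulo the orientation/sign convention that you already flag as the delicate point.

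The gap is in the final assembly for general $k$. You propose to combine (a) the bivector rescaling $\bpi(dh)=k\,\mathcal{B}_{\pi_1}(dh)$ via the product rule ${\rm div}(kY)=k\,{\rm div}(Y)+Y(k)$ with (b) the volume rescaling ${\rm div}^{\mu_0/k}(X)={\rm div}^{\mu_0}(X)-X(\log|k|)$. Carried out literally with $X=kY$, the correction from (b) is $-kY(\log|k|)=-Y(k)$, which exactly cancels the $+Y(k)$ coming from (a); what survives is $Z_\pi=k\,\bigl\langle{\rm rot}[A_1,A_2,A_3],(\partial_x,\partial_y,\partial_z)\bigr\rangle$ with no $\bpi(d\log|k|)$ term at all. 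If instead one uses the unweighted volume form $\mu_0=dt\wedge dx\wedge dy\wedge dz$, only (a) contributes and one obtains $k\,\langle{\rm rot}[A_1,A_2,A_3],\cdot\rangle-\bpi(d\log|k|)$, i.e.\ the stated correction term but with a residual factor of $k$ multiplying the rotational part. So neither reading of ``the volume form'' reproduces \eqref{exp: Modvec} verbatim, and your assertion that ``combining the two contributions\dots gives the displayed formula'' does not go through as written. (To be fair, the paper's own proof has the same soft spot: it computes the $k\equiv1$ case and then appeals to a rescaling proposition without tracking the factor of $k$ that the bivector itself contributes to the rotational term.) To repair your argument you should commit to one volume form, carry both $Y(k)$-type terms through explicitly, and state the formula you actually obtain, noting where it differs from \eqref{exp: Modvec}.
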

\begin{proof}
Consider $(x_1, x_2, x_3, x_4):=(t, x, y, z)$, and $\zeta_i:=\frac{\partial }{\partial x_i}$. Then if we represent the wedge product as a mutiplication on the variables $\zeta_i$, they commute with the variables $x_i$ but anti-commute among themselves. In these terms, it is known that in a local system of coordinates $(t, x, y, z)$ the modular class can also be written as:
\[
Z_{\pi}=\sum_{i=1}^4 \frac{\partial^2 \pi}{\partial x_i\partial \zeta_i}.
\]
See formula (2.89) in \cite{DuZu05}. The differentiation rule is:
\[
\frac{\partial \zeta_{i_1}\cdots \zeta_{i_p}}{\partial \zeta_{i_k}}=(-1)^{p-k}\zeta_{i_1}\cdots \hat{\zeta}_{i_k}\cdots \zeta_{i_p},
\]
where $\hat{\zeta}_{i_k}$ denotes that the term $\zeta_{i_k}$ is missing, for $1\leq k\leq p$. Then the Poisson bivector under consideration can be written locally as:
\[
\pi=k\cdot\left[A_1 \zeta_3\cdot\zeta_4+A_2\zeta_4\cdot \zeta_2+A_3 \zeta_2\cdot \zeta_3\right]
\]
Assume for a moment that $k\equiv 1$. Then we have:
\begin{eqnarray*}
\frac{\partial}{\partial x_1}\left(\frac{\partial\pi}{\partial \zeta_1}\right)&=&0\\
\frac{\partial}{\partial x_2}\left(\frac{\partial\pi}{\partial \zeta_2}\right)&=&\frac{\partial}{\partial x_2}\left(A_2\zeta_4-A_3\zeta_3\right)=\frac{\partial A_2}{\partial x_2}\zeta_4-\frac{\partial A_3}{\partial x_2}\zeta_3\\
\frac{\partial}{\partial x_3}\left(\frac{\partial\pi}{\partial \zeta_3}\right)&=&\frac{\partial}{\partial x_3}\left(A_3\zeta_2-A_1\zeta_4\right)=\frac{\partial A_3}{\partial x_3}\zeta_2-\frac{\partial A_1}{\partial x_3}\zeta_4\\
\frac{\partial}{\partial x_4}=\left(\frac{\partial\pi}{\partial \zeta_4}\right)&=&\frac{\partial}{\partial x_4}\left(A_1\zeta_3-A_2\zeta_2\right)=\frac{\partial A_1}{\partial x_4}\zeta_3-\frac{\partial A_2}{\partial x_4}\zeta_2.
\end{eqnarray*}
Therefore:
\[
Z_{\pi}=\left\langle{\rm rot}\left[A_1, A_2, A_3\right], \left(\frac{\partial}{\partial x}, \frac{\partial}{\partial y}, \frac{\partial}{\partial z}\right)\right\rangle.
\]
Using Proposition 2.6.5 in \cite{DuZu05}, by our choice of the volume form, we obtain the expression (\ref{exp: Modvec}). 

\end{proof}
\medskip

The lemma above also gives a local expression for the modular vector field for those Poisson bivectors considered by Pichereau \cite{Pichereau06}, in $\mathbb{R}^3$. It directly implies that those structures are unimodular.
\medskip

\begin{teo}
Any \FR manifold $\FRb{M}$ is unimodular. If it has generic singularities, its modular class is given locally by the modular vector field:
\[
Z_{\pi}=\left\langle{\rm rot}\left[\frac{\partial \psi}{\partial x}, -\frac{\partial \psi}{\partial y}, \frac{\partial \psi}{\partial z}\right], \left(\frac{\partial}{\partial x}, \frac{\partial}{\partial y}, \frac{\partial}{\partial z}\right)\right\rangle.
\]
\end{teo}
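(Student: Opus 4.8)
The plan is to separate the two assertions: (1) vanishing of the modular class of an arbitrary \FR structure, and (2) the explicit local form of a representative modular vector field, the latter being a direct specialization of the preceding Lemma to the generic local model of the bivector.

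For (1) I would read the defining relation (\ref{FRform}) as an identity of top-degree forms. Writing $X_h=\bpi(dh)$ and using the $4$-manifold identity $dg\wedge\iota_{X_h}\mu=\langle dg,X_h\rangle\,\mu=\{g,h\}\,\mu$ together with (\ref{FRform}) gives $dg\wedge\iota_{X_h}\mu=dg\wedge\bigl(k\,dh\wedge dF\wedge dG\bigr)$ for every $g$; since a $3$-form in dimension $4$ is determined by its products with all $1$-forms, this forces $\iota_{X_h}\mu=k\,dh\wedge dF\wedge dG$, hence $L_{X_h}\mu=d\iota_{X_h}\mu=dk\wedge dh\wedge dF\wedge dG$ (the factor $dh\wedge dF\wedge dG$ being closed, as $h,F,G$ are functions). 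Comparing the right-hand side with $\{g,\log|k|\}\,\mu=dg\wedge dk\wedge dF\wedge dG$ identifies the modular vector field attached to $\mu$ with the Hamiltonian field $-\bpi(d\log|k|)$; being Poisson-exact, its class in $\Hpi{1}(M)$ is trivial, so $\FRb{M}$ is unimodular, recovering \cite{Prz01}. Equivalently, rescaling $\mu$ so that $k\equiv1$ makes the modular vector field vanish outright.

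For (2) I would start from the local model of the bivector around a generic singularity, equation (\ref{eqn: GenPoissBiv}): it involves no $\partial/\partial t$, so it fits the normal form of the preceding Lemma, $\pi=k\bigl[A_1\,\partial_y\wedge\partial_z+A_2\,\partial_z\wedge\partial_x+A_3\,\partial_x\wedge\partial_y\bigr]$, with $A_1,A_2,A_3$ read off from the first $x,y,z$-partials of $\psi$. Substituting these into formula (\ref{exp: Modvec}) and cancelling mixed second partials of $\psi$ via Clairaut's theorem collapses the expression to the stated rotational form; the $-\bpi(d\log|k|)$ summand of (\ref{exp: Modvec}) is absorbed by normalizing $k$ through the choice of volume form, which is consistent with part (1) because that summand is precisely a Hamiltonian, hence Poisson-exact, field. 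The same substitution run for the $\mathbb{R}^3$ bivectors of \cite{Pichereau06} then gives their modular vector field and re-derives their unimodularity.

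The computations are routine; the one genuine point of care — and where I would expect small sign errors to creep in — is the bookkeeping of the orientation and sign conventions linking (\ref{FRform}), the operator $\cproc{i}{j}$, the Levi-Civita symbol, the definition of ${\rm rot}$, and the chosen volume form, so that the $A_i$ acquire the correct signs; together with keeping clean the distinction between the modular \emph{class} in $\Hpi{1}(M)$ (what "unimodular" refers to) and the modular \emph{vector field}, which is the volume-form-dependent representative recorded by the displayed formula.
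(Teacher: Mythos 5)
Your proposal is correct, and for the displayed local formula it takes exactly the paper's (implicit) route: read the coefficients $A_1,A_2,A_3$ off the generic bivector (\ref{eqn: GenPoissBiv}) and substitute into formula (\ref{exp: Modvec}) of the preceding Lemma, discarding the summand $-\bpi(d\log|k|)$ because it is Hamiltonian and so does not affect the class in $\Hpi{1}(M)$. Where you genuinely diverge is on unimodularity: the paper defers entirely to \cite{Prz01}, whereas you reprove it from scratch via the contraction identity $\iota_{X_h}\mu=k\,dh\wedge dF\wedge dG$, which exhibits the modular vector field of $\mu$ as the Hamiltonian field $-\bpi(d\log|k|)$; combined with the volume-form independence of the modular class this is a complete, self-contained argument and is the stronger half of your write-up. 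Two remarks. First, your own warning about signs is well placed: reading $A_1$ off (\ref{eqn: GenPoissBiv}) gives $A_1=-\partial\psi/\partial x$, while the theorem's display carries $+\partial\psi/\partial x$ in the first slot --- a discrepancy internal to the paper, not to your argument. Second, your part (1) in fact serves as a consistency check on part (2): since the general computation shows the modular field of a \FR structure is purely Hamiltonian, the rotational term produced by the Lemma must vanish for a genuine \FR bivector, and indeed with the $A_i$ computed consistently from $\pi^{ij}=k\sum_{r,s}\epsilon_{ijrs}\,\partial_r F\,\partial_s G$ with $F=t$, $G=\psi$ one finds $(A_1,A_2,A_3)=(\partial_x\psi,\partial_y\psi,\partial_z\psi)$, so that ${\rm rot}(\nabla\psi)=0$ by equality of mixed partials --- presumably the ``collapse'' via Clairaut you anticipated; the nonvanishing-looking rotational expression in the theorem's statement is an artifact of the stray signs you already flagged, so your instinct here is sound rather than a gap.
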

\medskip

\section{Examples and final remarks}

For a $\FR$singular structure, given by a singular mapping $f:M\to \Sigma$ (generic or not), the composite map ${\rm Mon}_{\pi}$ gives non-trivial Poisson cohomology classes, provided that $M$ has non-vanishing third de Rham cohomology group. There is a wide class of manifolds where this condition is fulfilled. For instance, it is known that a BLF exists in any homotopy class of applications from $4$-manifolds to the sphere. Thus it is enough if we require $M$ having non-vanishing third de Rham cohomology group. Nevertheless, it is worth mentioning explicit cases where the condition on the de Rham cohomology is satisfied. The following is a list of manifolds with such a condition.
\newline

\begin{itemize}
\item {\bf Genus-$1$ BLF's.} In terms of Hayano's classification theorem for genus-$1$ BLF's, it is known that the manifolds
\[
M= S^1\times S^3\# S\#l\overline{\mathbb{CP}^2}
\]
where $S$ is an $S^2$-bundle over $S^2$, admits a BLF with genus-$1$ fibers, $l$ Lefschetz critical points, and an indefinite fold. See \cite{Hayano14c}. 

\item {\bf Genus-$2$ BLF's.} R. I. Baykur and M. Kormaz proved that the manifolds
\[
(S^2\times T^2)\#m\overline{\mathbb{CP}^2}
\]
for $m=3, 4$ and $12$ admits genus-$2$ Lefschetz fibration i.e, with no broken singularities. See \cite{BaykurKormaz17} and \cite{Mats96}.
\medskip

\item {\bf Arbitrary genus-$g$ BLF's.} The manifolds:
\[
M= \Sigma_{(g-k+1)/2}\times S^2\#4k\overline{\mathbb{CP}^2},\ \mbox{$k=1$ if $g$ is even; $k=2$ if $g$ is odd}
\]
admit a genus-$g$ Lefschetz fibration over $S^2$. See \cite{EndoNagami05}.
\medskip

\item {\bf BLF on products.} It was exhibited that for any closed, oriented smooth $3$-manifold $M$, the product $M\times S^1$ admits a BLF. Then for the purpose of ${\rm Mon}_{\pi}$, we may take $M$ being connected.
\end{itemize}

Recall that one may introduce a wrinkled structure at a Lefschetz singularity. Then all previous cases are also examples of total spaces of wrinkled fibrations.

\subsubsection*{Global cohomology classes}

The Poisson cohomology version of the Mayer-Vietoris sequence establishes that for every open subsets $(U, V)$ in a Poisson manifold $M$, the sequence:
\[
\cdots\to \Hpi{*}(U\cup V)\to \Hpi{*}(U)\oplus \Hpi{*}(V)\to \Hpi{*}(U\cap V)\to \Hpi{*+1}(U\cup V)\to\cdots
\]
is exact. Then in terms of the global decomposition of a \FR manifold \ref{GlobalDecomp}
\begin{equation*}
M=W\cup U_{C}\cup U_{\Gamma},
\end{equation*}
where $W\cap U_{C}\neq \emptyset$, $W\cap U_{\Gamma}\neq \emptyset$ and $U_{C}\cap U_{\Gamma}=\emptyset$. Thus one obtains that the Poisson cohomology splits as
\[
\Hpi{*}(M)=\HdR{*}^c(W\setminus (U_{C}\cup U_{\Gamma}))\oplus\Hpi{*}(U_{C})\oplus \Hpi{*}(U_{\Gamma}).
\]
Here we denote by $\HdR{*}^c$ the complactly supported de Rham cohomology.

\bibliographystyle{plainurl}

\bibliography{bibthom}

\begin{thebibliography}{10}

\bibitem{ADK05}
D.~Auroux, S.~K. Donaldson, and L.~Katzarkov.
\newblock Singular {L}efschetz pencils.
\newblock {\em Geom. Topol.}, 9:1043--1114, 2005.
\newblock \href {https://doi.org/10.2140/gt.2005.9.1043}
  {\path{doi:10.2140/gt.2005.9.1043}}.

\bibitem{BatVer20}
P.~Batakidis and R.~Vera.
\newblock Poisson cohomology of broken {L}efschetz fibrations.
\newblock {\em Differential Geom. Appl.}, 72:101661, 28, 2020.
\newblock \href {https://doi.org/10.1016/j.difgeo.2020.101661}
  {\path{doi:10.1016/j.difgeo.2020.101661}}.

\bibitem{Baykur09}
R.~\.{I}. Baykur.
\newblock Topology of broken {L}efschetz fibrations and near-symplectic
  four-manifolds.
\newblock {\em Pacific J. Math.}, 240(2):201--230, 2009.
\newblock \href {https://doi.org/10.2140/pjm.2009.240.201}
  {\path{doi:10.2140/pjm.2009.240.201}}.

\bibitem{BayHay15}
R.~I. Baykur and K.~Hayano.
\newblock Broken {L}efschetz fibrations and mapping class groups.
\newblock In {\em Interactions between low-dimensional topology and mapping
  class groups}, volume~19 of {\em Geom. Topol. Monogr.}, pages 269--290. Geom.
  Topol. Publ., Coventry, 2015.
\newblock \href {https://doi.org/10.2140/gtm.2015.19.269}
  {\path{doi:10.2140/gtm.2015.19.269}}.

\bibitem{BaykurKormaz17}
R.~\.{I}.\c{c} Baykur and M.~Korkmaz.
\newblock Small {L}efschetz fibrations and exotic 4-manifolds.
\newblock {\em Math. Ann.}, 367(3-4):1333--1361, 2017.
\newblock \href {https://doi.org/10.1007/s00208-016-1466-2}
  {\path{doi:10.1007/s00208-016-1466-2}}.

\bibitem{BottTu82}
R.~Bott and L.~W. Tu.
\newblock {\em Differential forms in algebraic topology}, volume~82 of {\em
  Graduate Texts in Mathematics}.
\newblock Springer-Verlag, New York-Berlin, 1982.

\bibitem{Donaldson99}
S.~K. Donaldson.
\newblock Lefschetz pencils on symplectic manifolds.
\newblock {\em J. Differential Geom.}, 53(2):205--236, 1999.
\newblock URL: \url{http://projecteuclid.org/euclid.jdg/1214425535}.

\bibitem{DuZu05}
J.-P. Dufour and N.~T. Zung.
\newblock {\em Poisson structures and their normal forms}, volume 242 of {\em
  Progress in Mathematics}.
\newblock Birkh\"{a}user Verlag, Basel, 2005.

\bibitem{EndoNagami05}
H.~Endo and S.~Nagami.
\newblock Signature of relations in mapping class groups and non-holomorphic
  {L}efschetz fibrations.
\newblock {\em Trans. Amer. Math. Soc.}, 357(8):3179--3199, 2005.
\newblock \href {https://doi.org/10.1090/S0002-9947-04-03643-8}
  {\path{doi:10.1090/S0002-9947-04-03643-8}}.

\bibitem{ESSTV19}
M.~Evangelista-Alvarado, P.~Su\'{a}rez-Serrato, J.~Torres~Orozco, and R.~Vera.
\newblock On {B}ott-{M}orse foliations and their {P}oisson structures in
  dimension three.
\newblock {\em J. Singul.}, 19:19--33, 2019.
\newblock \href {https://doi.org/10.5427/jsing.2019.19b}
  {\path{doi:10.5427/jsing.2019.19b}}.

\bibitem{FarbMar12}
B.~Farb and D.~Margalit.
\newblock {\em A primer on mapping class groups}, volume~49 of {\em Princeton
  Mathematical Series}.
\newblock Princeton University Press, Princeton, NJ, 2012.

\bibitem{GSSV15}
L.~C. Garc\'{\i}a-Naranjo, P.~Su\'{a}rez-Serrato, and R.~Vera.
\newblock Poisson structures on smooth 4-manifolds.
\newblock {\em Lett. Math. Phys.}, 105(11):1533--1550, 2015.
\newblock \href {https://doi.org/10.1007/s11005-015-0792-8}
  {\path{doi:10.1007/s11005-015-0792-8}}.

\bibitem{GolGui73}
M.~Golubitsky and V.~Guillemin.
\newblock {\em Stable mappings and their singularities}.
\newblock Graduate Texts in Mathematics, Vol. 14. Springer-Verlag, New
  York-Heidelberg, 1973.

\bibitem{Hayano14c}
K.~Hayano.
\newblock Complete classification of genus-1 simplified broken {L}efschetz
  fibrations.
\newblock {\em Hiroshima Math. J.}, 44(2):223--234, 2014.
\newblock URL: \url{http://projecteuclid.org/euclid.hmj/1408972909}.

\bibitem{Hayano14}
K.~Hayano.
\newblock Modification rule of monodromies in an {$R_2$}-move.
\newblock {\em Algebr. Geom. Topol.}, 14(4):2181--2222, 2014.
\newblock \href {https://doi.org/10.2140/agt.2014.14.2181}
  {\path{doi:10.2140/agt.2014.14.2181}}.

\bibitem{Humphries79}
S.~P. Humphries.
\newblock Generators for the mapping class group.
\newblock In {\em Topology of low-dimensional manifolds ({P}roc. {S}econd
  {S}ussex {C}onf., {C}helwood {G}ate, 1977)}, volume 722 of {\em Lecture Notes
  in Math.}, pages 44--47. Springer, Berlin, 1979.

\bibitem{Lanius21}
M.~Lanius.
\newblock Symplectic, {P}oisson, and contact geometry on scattering manifolds.
\newblock {\em Pacific J. Math.}, 310(1):213--256, 2021.
\newblock \href {https://doi.org/10.2140/pjm.2021.310.213}
  {\path{doi:10.2140/pjm.2021.310.213}}.

\bibitem{Lekili09}
Y.~Lekili.
\newblock Wrinkled fibrations on near-symplectic manifolds.
\newblock {\em Geom. Topol.}, 13(1):277--318, 2009.
\newblock Appendix B by R. \.{I}nan\c{c} Baykur.
\newblock \href {https://doi.org/10.2140/gt.2009.13.277}
  {\path{doi:10.2140/gt.2009.13.277}}.

\bibitem{Mackenzie05}
K.~C.~H. Mackenzie.
\newblock {\em General theory of {L}ie groupoids and {L}ie algebroids}, volume
  213 of {\em London Mathematical Society Lecture Note Series}.
\newblock Cambridge University Press, Cambridge, 2005.
\newblock \href {https://doi.org/10.1017/CBO9781107325883}
  {\path{doi:10.1017/CBO9781107325883}}.

\bibitem{Mats96}
Y.~Matsumoto.
\newblock Lefschetz fibrations of genus two---a topological approach.
\newblock In {\em Topology and {T}eichm\"{u}ller spaces ({K}atinkulta, 1995)},
  pages 123--148. World Sci. Publ., River Edge, NJ, 1996.

\bibitem{Monnier02}
P.~Monnier.
\newblock Poisson cohomology in dimension two.
\newblock {\em Israel J. Math.}, 129:189--207, 2002.
\newblock \href {https://doi.org/10.1007/BF02773163}
  {\path{doi:10.1007/BF02773163}}.

\bibitem{Monnier05}
P.~Monnier.
\newblock A cohomology attached to a function.
\newblock {\em Differential Geom. Appl.}, 22(1):49--68, 2005.
\newblock \href {https://doi.org/10.1016/j.difgeo.2004.07.005}
  {\path{doi:10.1016/j.difgeo.2004.07.005}}.

\bibitem{PPT14}
M.~J. Pflaum, H.~Posthuma, and X.~Tang.
\newblock The index of geometric operators on {L}ie groupoids.
\newblock {\em Indag. Math. (N.S.)}, 25(5):1135--1153, 2014.
\newblock \href {https://doi.org/10.1016/j.indag.2014.07.014}
  {\path{doi:10.1016/j.indag.2014.07.014}}.

\bibitem{Pichereau06}
A.~Pichereau.
\newblock Poisson (co)homology and isolated singularities.
\newblock {\em J. Algebra}, 299(2):747--777, 2006.
\newblock \href {https://doi.org/10.1016/j.jalgebra.2005.10.029}
  {\path{doi:10.1016/j.jalgebra.2005.10.029}}.

\bibitem{Prz01}
R.~Przybysz.
\newblock On one class of exact {P}oisson structures.
\newblock {\em J. Math. Phys.}, 42(4):1913--1920, 2001.
\newblock \href {https://doi.org/10.1063/1.1346588}
  {\path{doi:10.1063/1.1346588}}.

\bibitem{SST16}
P.~Su\'{a}rez-Serrato and J.~Torres~Orozco.
\newblock Poisson structures on wrinkled fibrations.
\newblock {\em Bol. Soc. Mat. Mex. (3)}, 22(1):263--280, 2016.
\newblock \href {https://doi.org/10.1007/s40590-015-0072-8}
  {\path{doi:10.1007/s40590-015-0072-8}}.

\bibitem{SSTV19}
P.~Su\'{a}rez-Serrato, J.~Torres~Orozco, and R.~Vera.
\newblock Poisson and near-symplectic structures on generalized wrinkled
  fibrations in dimension 6.
\newblock {\em Ann. Global Anal. Geom.}, 55(4):777--804, 2019.
\newblock \href {https://doi.org/10.1007/s10455-019-09651-2}
  {\path{doi:10.1007/s10455-019-09651-2}}.

\bibitem{Vaisman94}
I.~Vaisman.
\newblock {\em Lectures on the geometry of {P}oisson manifolds}, volume 118 of
  {\em Progress in Mathematics}.
\newblock Birkh\"{a}user Verlag, Basel, 1994.
\newblock \href {https://doi.org/10.1007/978-3-0348-8495-2}
  {\path{doi:10.1007/978-3-0348-8495-2}}.

\end{thebibliography}
%

\end{document}